\documentclass[12pt]{amsart}
\usepackage[left=3.5cm, right=3cm, top=4cm, bottom=4cm]{geometry} 
\usepackage{
 amsmath, 
 amsxtra, 
 amsthm, 
 amssymb, 
 etex, 
 mathrsfs, 
 mathtools, 
 tikz-cd, 
 bbm,
 xr,
 comment,
 enumitem}
\usepackage[all]{xy}
\usepackage{hyperref}
\usepackage{url}
\usepackage{tipa}
\usepackage{dsfont}

\newtheorem{theorem}{Theorem}[section]
\newtheorem{lemma}[theorem]{Lemma}
\newtheorem{conjecture}[theorem]{Conjecture}
\newtheorem{proposition}[theorem]{Proposition}

\newtheorem{defn}[theorem]{Definition}

\newtheorem{conj}[theorem]{Conjecture}
\newtheorem{hyp}[theorem]{Hyp}
\numberwithin{equation}{section}
\newtheorem{lthm}{Theorem} % theorems with letters (for intro)
\newtheorem{propl}[lthm]{Proposition}

\theoremstyle{remark}
\newtheorem{remark}[theorem]{Remark}
\newtheorem{conv}[theorem]{Convention}
\setlength{\parskip}{.5\baselineskip}

\usepackage[utf8]{inputenc}

\newcommand\EatDot[1]{}

\newcommand{\cyc}{{\mathrm{cyc}}}

\newcommand{\cL}{{\mathcal{L}}}
\newcommand{\bH}{\mathbf{H}}

\newcommand{\sL}{{\mathscr{L}}}
\newcommand{\fL}{{\mathfrak{L}}}

\newcommand{\cH}{{\mathcal{H}}}

\newcommand{\Mlog}{M_{\log}}

\newcommand{\Span}{\mathrm{Span}}

\newcommand{\Fil}{\mathrm{Fil}}
\newcommand{\Image}{\mathrm{Image}}

\newcommand{\ch}{{\mathrm{char}}}

\newcommand{\Gal}{{\mathrm{Gal}}}
\newcommand{\Ker}{{\mathrm{Ker}}}

\newcommand{\Sel}{{\mathrm{Sel}}}
\newcommand{\GL}{{\mathrm{GL}}}

\newcommand{\Tw}{\mathrm{Tw}}

\newcommand{\Hom}{\mathrm{Hom}}
\newcommand{\Lie}{\mathrm{Lie}}
\newcommand{\loc}{\mathrm{loc}}

\newcommand{\ord}{\mathrm{ord}}
\newcommand{\Q}{{\mathbb Q}}
\newcommand{\Z}{{\mathbb Z}}
\newcommand{\bz}{{\mathbf z}}

\newcommand{\A}{{\mathbb A}}
\newcommand{\R}{{\mathbb R}}

\newcommand{\C}{\mathbb{C}}
\newcommand{\vp}{\varphi}
\newcommand{\cc}{\chi_\cyc}
\newcommand{\Brig}{\mathbb{B}_{\mathrm{rig}}^+}
\newcommand{\Dcris}{\mathbb{D}_{\mathrm{cris}}}

\newcommand{\col}{\mathrm{Col}}
\newcommand{\QQ}{\mathbb{Q}}
\newcommand{\ZZ}{\mathbb{Z}}
\newcommand{\Qp}{\mathbb{Q}_p}
\newcommand{\Zp}{\mathbb{Z}_p}
\newcommand{\Cp}{\mathbb{C}_p}
\newcommand{\NN}{\mathbb{N}}
\newcommand{\lb}{[[}
\newcommand{\rb}{]]}
\newcommand{\cN}{\mathcal{N}}
\newcommand{\tell}{\widetilde{\ell}}
\newcommand{\Crit}{\mathrm{Crit}}
\newcommand{\rank}{\mathrm{rank}}
\newcommand{\HIw}{H^1_{\mathrm{Iw}}}
 % (or \bar{#1})

\definecolor{Green}{rgb}{0.0, 0.5, 0.0}
\newcommand{\bZ}{\mathbf{Z}}
\newcommand{\cO}{\mathcal{O}}
\newcommand{\tcol}{\widetilde{\col}}

\title[Non-ordinary Iwasawa theory of automorphic representations]{Iwasawa theory of automorphic representations of $\mathrm{GL}_{2n}$ at non-ordinary primes}

\author[A.~Lei]{Antonio Lei}
\address[Lei]{Department of Mathematics and Statistics\\University of Ottawa\\
150 Louis-Pasteur Pvt\\
Ottawa, ON\\
Canada K1N 6N5}
\email{antonio.lei@uottawa.ca}

\author[J. Ray]{Jishnu Ray}
\address[Ray]{TCG Centres for Research and Education
in Science and Technology \\1st Floor, Tower 1, Bengal Eco Intelligent Park (Techna Building), Block EM, Plot No 3, Sector V, Salt Lake, Kolkata 700091}
\email{jishnu.ray@tcgcrest.org; jishnuray1992@gmail.com}

\keywords{automorphic representations, non-ordinary primes, $p$-adic $L$-functions, Iwasawa main conjecture}

\subjclass[2020]{Primary: 11R23; Secondary: 11F70, 11F80, 11F67}

\begin{document}

\begin{abstract}
    Let $\Pi$ be a cuspidal automorphic representation of $\mathrm{GL}_{2n}(\mathbb{A_Q})$ and let $p$ be an odd prime at which $\Pi$ is unramified. In a {recent} work, Barrera, Dimitrov and Williams {constructed possibly unbounded}  $p$-adic $L$-functions interpolating complex $L$-values of $\Pi$ in the non-ordinary case. {Under certain assumptions}, we  construct two \textit{bounded} $p$-adic $L$-functions for $\Pi$,  thus extending an earlier work of  Rockwood by relaxing the Pollack condition. Using Langlands local-global compatibility,   we define signed Selmer groups over the $p$-adic cyclotomic extension of $\QQ$ attached to the $p$-adic Galois representation of $\Pi$ and formulate Iwasawa main conjectures in the spirit of Kobayashi's plus and minus main conjectures for $p$-supersingular elliptic curves.

\end{abstract}

\maketitle

\section{Introduction}
\subsection*{A brief history on non-ordinary Iwasawa theory of elliptic curves and elliptic modular forms} Let $p$ be a fixed odd prime and $f=\sum_{m\ge1} a_mq^m$ a cuspidal elliptic modular form. When $f$ has  good ordinary reduction at $p$, Mazur and Swinnerton-Dyer, as well as Manin, constructed a $p$-adic $L$-function attached to $f$ which is a bounded measure on $\Gamma:=\Gal(\QQ(\mu_{p^\infty})/\QQ)$, interpolating complex $L$-values of $f$ twisted by Dirichlet characters on $\Gamma$  (see \cite{MSD74,manin}). When $f$ has good non-ordinary reduction at $p$, the situation is quite different. While the aforementioned works on the ordinary case can be extended to the non-ordinary case, the resulting $p$-adic $L$-functions are not necessarily bounded  (see \cite{amicevelu75,visik76}). 

In  \cite{pollack03}, Pollack showed that when $a_p=0$, there is a very elegant way to decompose the $p$-adic $L$-functions attached to $f$, using the so-called plus and minus logarithms, into bounded measures by exploiting the symmetry between the two roots of the Hecke polynomial of $f$ at $p$. These bounded measures were  utilized by Kobayashi to formulate the so-called plus and minus Iwasawa main conjectures in \cite{kobayashi03} when $f$ corresponds to an elliptic curve defined over $\QQ$. More precisely, Kobayashi defined the so-called plus and minus Selmer groups over $\QQ(\mu_{p^\infty})$. Using Kato's Euler system constructed in \cite{Kato}, he showed that their Pontryagin duals are torsion over the Iwasawa algebra of $\Gamma$ and related the characteristic ideals to Pollack's bounded $p$-adic $L$-functions. Kobayashi's work has been generalized to {higher weight elliptic} modular forms when $a_p=0$ by the first named author of this article (see \cite{lei11compositio}). The works of Pollack and Kobayashi have been simultaneously generalized to the $a_p\ne 0$ case by Sprung \cite{sprung09} (for elliptic curves) and by Lei--Loeffler--Zerbes \cite{leiloefflerzerbes10,leiloefflerzerbes11} (for {higher weight elliptic} modular forms).
\subsection*{Automorphic   results in this paper}
Let $n \geq 1$ be an integer and $\Pi$  a  {regular algebraic, essentially self-dual, cuspidal} automorphic representation of $\GL_{2n}(\A_\Q)$ that is unramified at $p$. When $\Pi$ admits a Shalika model and is ordinary at $p$,  Dimitrov, Januszewski and Raghuram generalized earlier results of Ash and Ginzburg in \cite{AshGinzburg}  to construct a bounded $p$-adic $L$-function interpolating the complex $L$-values of $\Pi$ twisted by Dirichlet characters on $\Gamma$ (see \cite{DJR}). 
This construction {has been} further  generalized to the non-ordinary case in a {recent work of Barrera, Dimitrov and Williams \cite{BDW}}. As in the case of elliptic modular forms, when $\Pi$ is non-ordinary at $p$, the resulting $p$-adic $L$-functions  are  distributions {that are possibly unbounded}.

{Thanks to} results from local Langlands program, there exists a compatible family $\rho_{\Pi,\lambda, \iota}: G_\Q \rightarrow \GL_{2n}(\mathrm{E}(\Pi)_\lambda)$ of continuous $\lambda$-adic representations where $\lambda$ runs through the finite places of a number field $\mathrm{E}(\Pi)$ (see Section \ref{sec:RASEDC} below). From now on, we fix a prime $\lambda$ of $E(\Pi)$ lying above $p$ with ring of integers $\cO$ and write $F=E(\Pi)_\lambda$. By enlarging $F$ if necessary, we shall assume that $F$ contains all the Satake parameters of $\Pi$ at $p$. Let 
$V_\Pi$ be the dual representation {$\rho_{\Pi,\lambda,\iota}^*$}. We fix {a $G_{\QQ}$}-stable lattice  $T_\Pi$ inside $V_\Pi$ and write $T_\Pi^\dagger=\Hom_{\mathrm{cts}}(T_\Pi, F/\cO(1))$.
The main results of this paper can be summarized as follows.

\begin{propl}[Proposition \ref{thm:upper-tri}] \label{MT}
Suppose that the hypotheses in \text{Hyp \ref{hypSet1}} hold. The local representation {$V_\Pi|_{G_{\Qp}}$} is of the form 
\[
\begin{pmatrix}
\cc^{h_{1}}\theta_{1} &*&\cdots&*&*&*&*&\cdots&*&*\\
0&\cc^{h_{2}}\theta_{2} &\cdots&*&*&*&*&\cdots&*&*\\
\vdots&\ddots&\ddots&&\vdots&\vdots&\vdots&&\vdots&\vdots\\
0&\cdots&0&\cc^{h_{n-1}}\theta_{n-1}&*&*&*&\cdots&*&*\\
0&\cdots&\cdots&0 &\times&\times&*&\cdots&*&*\\
0&\cdots&\cdots&0 &\times&\times&*&\cdots&*&*\\
0&\cdots&\cdots&0 &0&0&\cc^{h_{n+2}}\theta_{n+2}&\cdots&*&*\\
\vdots&&& \vdots&\vdots&\vdots&\ddots&\ddots&\vdots&\vdots\\
0&\cdots&\cdots&0 &0&0&\cdots&0&\cc^{h_{2n-1}}\theta_{2n-1}&*\\
0&\cdots&\cdots&0 &0&0&\cdots&\cdots&0&\cc^{h_{2n}}\theta_{2n}
\end{pmatrix},
\]
where $\cc$ is the $p$-adic cyclotomic character, $h_i$  are the Hodge--Tate weights {of $V_\Pi$} and  $\theta_i$ are unramified characters on $G_{\Qp}$.
\end{propl}

{Proposition} ~\ref{MT} generalizes a result of Ghate--Kumar in \cite{GhateKumarPublished} where they showed that the local representation attached to $\Pi$ is upper-triangular in the ordinary case.
The conditions in Hyp \ref{hypSet1} assert certain relations between the Hodge--Tate weights and  Satake paremeters of $\Pi$ at $p$ and that the Newton and Hodge filtrations of the Dieudonné module are in general position. 
More specifically, if $\alpha_1,\ldots \alpha_{2n}$ are the Satake parameters ordered by $\ord_p(\alpha_1)\ge \ord_p(\alpha_2)\ge\cdots\ge \ord_p(\alpha_{2n})$, we are assuming that
\begin{align*}
   &\ord_p(\alpha_i)=h_i,\ i=n+2,\ldots, 2n; \\
   &\ord_p(\alpha_{n+1})>h_{n+1}\ \text{and}\ \alpha_n\ne\alpha_{n+1}.
\end{align*}
These conditions ensure that the local representation is close to being ordinary, admitting at most a 2-dimensional non-ordinary sub-quotient. It allows us to modify previous works on non-ordinary Iwasawa theory for elliptic modular forms to the current setting. In particular, we prove:

\begin{lthm}[Theorem~\ref{thm:sharpflatpadicL}]\label{MT2}
 Suppose that the hypotheses  \text{Hyp \ref{hypSet1}} and \text{Hyp \ref{hypSet2}} hold.  Let $\sL_p^{(\alpha)}$ and $\sL_p^{(\beta)}$ be certain twists of the  $p$-adic $L$-functions of Barrera, Dimitrov and Williams (see Definition \ref{twistBWD}). There exist bounded $p$-adic $L$-functions $\sL_p^\#,\sL_p^\flat \in \cO[[\Gamma]] \otimes F$  such that
\[
\begin{pmatrix}
   \sL_p^{(\alpha)}\\ \sL_p^{(\beta)}
   \end{pmatrix}=Q^{-1}\Mlog'\begin{pmatrix}
 \sL_p^\#\\  \sL_p^\flat
   \end{pmatrix},
\]
where $Q$ and $\Mlog'$ are certain $2\times 2$ matrices with coefficients in the the distribution algebra on $\Gamma$, defined in Definitions \ref{defn:twist-col} and \ref{def:ColBlock} respectively. Furthermore, at least one of the two $p$-adic $L$-functions $ \sL_p^\#$ and $ \sL_p^\flat$ is non-zero.
\end{lthm}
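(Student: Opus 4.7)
The plan is to adapt the Pollack--Kobayashi--Sprung decomposition from the elliptic/modular setting to the automorphic one, where by Theorem~\ref{MT} all the non-triangulable behaviour is concentrated in the middle $2 \times 2$ block of the upper-triangular local representation. The rank-one graded pieces outside that block are ordinary, so they only contribute bounded twisting factors to the BDW $p$-adic $L$-functions; the unboundedness of $\sL_p^{(\alpha)}$ and $\sL_p^{(\beta)}$ comes entirely from the middle block, which is the local Galois representation attached to a Hodge--Tate pair of weights $(h_n, h_{n+1})$ with two distinct Satake parameters $\alpha$ and $\beta$. The matrix $\Mlog'$ is the Pollack--Sprung logarithm matrix attached to this block (built in Definition~\ref{def:ColBlock} from the action of $\varphi$ on $\Dcris$ in a basis adapted to the two triangulations given by $\alpha$ and $\beta$), and $Q$ is the twisting matrix from Definition~\ref{defn:twist-col} accounting for the cyclotomic shift by $\cc^{h_1+\cdots+h_{n-1}}$ (or the analogous factor) coming from the ordinary upper-left block above the middle.

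First I would verify that $\Mlog'$ is invertible over the distribution algebra $\cH(\Gamma)$; its determinant is, up to units, a product of Pollack's half-logarithms, which is a nonzero element of $\cH(\Gamma)$. Setting
\[
\begin{pmatrix} \sL_p^\# \\ \sL_p^\flat \end{pmatrix} := (\Mlog')^{-1}\, Q \begin{pmatrix} \sL_p^{(\alpha)} \\ \sL_p^{(\beta)} \end{pmatrix}
\]
then produces a priori two distributions in $\cH(\Gamma) \otimes F$, and the relation in the theorem holds by definition. The central step is to show that these distributions are in fact bounded, i.e.\ lie in $\cO\lb\Gamma\rb \otimes F$. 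The argument I would run is the Pollack--Sprung cancellation: by the BDW interpolation formulas, the values of $\sL_p^{(\alpha)}$ and $\sL_p^{(\beta)}$ at a Dirichlet character of conductor $p^m$ involve $\alpha^{-m}$ and $\beta^{-m}$ respectively, and these are precisely the factors that cancel against the $\alpha^m, \beta^m$ appearing in the entries of $\Mlog'$ evaluated at the same character (after the appropriate twist by $Q$). Boundedness of $\sL_p^\#$ and $\sL_p^\flat$ then follows from the Amice--V\'elu / Vi\v{s}ik criterion: a distribution on $\Gamma$ whose values at characters of finite order are uniformly $p$-adically bounded is automatically in $\cO\lb\Gamma\rb \otimes F$.

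For the final non-vanishing claim, I would argue by contradiction: if both $\sL_p^\#$ and $\sL_p^\flat$ vanished, then the matrix relation would force $\sL_p^{(\alpha)} = \sL_p^{(\beta)} = 0$, contradicting the non-vanishing of the BDW $p$-adic $L$-functions, which itself follows from the existence of at least one critical twist at which the complex $L$-value is nonzero (guaranteed under Hyp~\ref{hypSet2}). The hard part will be pinning down the integrality step with the correct normalization of $Q$ and $\Mlog'$: as in Sprung's work, the boundedness of $\sL_p^\#, \sL_p^\flat$ is rather delicate and depends on choosing the cyclotomic twist encoded by $Q$ so that the $\alpha^{-m}, \beta^{-m}$ growth of the interpolation values of $\sL_p^{(\alpha)}, \sL_p^{(\beta)}$ is matched \emph{exactly} by the reciprocal growth of the entries of $Q^{-1}\Mlog'$. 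This matching is what forces the particular forms of Definitions~\ref{defn:twist-col} and \ref{def:ColBlock}, and it is where the automorphic input (specifically the fact that only the middle block contributes non-ordinary growth) is used in an essential way.
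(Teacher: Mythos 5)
Your overall strategy matches the paper's: define $\sL_p^\#,\sL_p^\flat$ by inverting $Q^{-1}\Mlog'$, show boundedness via a Pollack--Sprung cancellation between the $\lambda^{-m}$ growth of the interpolation values and the growth of the logarithmic matrix, and deduce non-vanishing from the non-vanishing of a complex $L$-value (the paper uses the same observation contrapositively, via (FL) $\Rightarrow |\Crit(\Pi)|>1$ and \cite[(1.3)]{JS}). However, there are two points that need correction.

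First, your description of $Q$ is wrong. In Definition~\ref{defn:twist-col}, $Q$ is a \emph{constant} matrix in $M_{2\times 2}(F)$: the change-of-basis matrix between the $\varphi$-eigenvector basis $\{\varpi_n,\varpi_{n+1}\}$ and the filtration-adapted basis $\{\varpi_n^\circ,\varpi_{n+1}^\circ\}$ of $\Dcris(T_\Pi'')$, where $T_\Pi''=T_\Pi'(-h_{n+1})$ is the Tate twist of the middle block. It has nothing to do with a cyclotomic shift ``coming from the ordinary upper-left block'': the upper-left rank-one pieces do not enter $Q$ at all, and the cyclotomic twist that does occur in the construction is carried by $\Tw^{h_{n+1}}$, which is how the paper passes from $\Mlog''$ to $\Mlog'$ and from the auxiliary $\fL_p^{\bullet}$ to $\sL_p^{\bullet}$.

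Second, and more seriously, your stated boundedness criterion is false: a distribution on $\Gamma$ whose values at \emph{all} characters of finite order are uniformly bounded need not lie in $\cO\lb\Gamma\rb\otimes F$. For example $\log_p(1+X)\in\cH(\Gamma_1)$ vanishes at every nontrivial finite-order character of $\Gamma_1$ (since $\log_p$ of a $p$-power root of unity is $0$) but is not a bounded measure. Finite-order character values determine a distribution only up to a term of arbitrary order; to conclude boundedness one needs an a priori order bound on $(\Mlog')^{-1}Q\,(\sL_p^{(\alpha)},\sL_p^{(\beta)})^{T}$ (e.g.\ $O(\log_p^r)$ for some finite $r$) together with control of enough interpolation values in the admissible range, which is exactly the content packaged in \cite[Proposition~2.11]{BFSuper}. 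The paper applies that proposition to the twisted pair $\fL_p^{(\lambda)}=\Tw^{-h_{n+1}}\sL_p^{(\lambda)}$, with the growth hypothesis $\fL_p^{(\lambda)}\in\cH_{\ord_p(\lambda)-h_{n+1}}(\Gamma)$ and the $\mu^{-m}$-shape of the interpolation values both playing essential roles. If you want to avoid that black box, you must supply both ingredients; value-boundedness at finite-order characters by itself does not close the argument.
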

{The conditions in Hyp \ref{hypSet2} assert that the unramified characters appearing  in  the 1-dimensional  sub-quotients in Proposition~\ref{MT} are non-trivial and that the 2-dimensional sub-quotient in the middle is non-ordinary and satisfies  the  Fontaine--Laffaille condition. The former condition simplifies some of our calculations with local cohomology groups  and  can potentially be removed with some extra work, whereas the latter is crucial in order for us to apply previous results on  Wach modules theory in our  construction of the matrix $\Mlog'$.}

 Theorem~\ref{MT2} generalizes a prior result of Rockwood \cite{rob}. One of the main hypotheses in \cite{rob} is the Pollack condition, which says that $\alpha_n +  \alpha_{n+1}=0$, where $\alpha_j$ is the $j$-th Satake parameter of $\Pi$ at $p$, ordered according to their $p$-adic valuations. This Pollack condition is analogous to the condition $a_p=0$ for {elliptic} modular forms, which means that the two roots of its Hecke polynomial at $p$ add up to zero. Our main effort in this article is to construct bounded $p$-adic $L$-functions without assuming the Pollack condition.

 Note that in \cite{BDW}, the authors in fact constructed $p$-adic $L$-functions for automorphic representations over a totally real field. However, the Wach module theory that we rely on is only available for unramified extensions of $\Qp$. It will be interesting to study a generalization of Theorem~\ref{MT2} for automorphic representations over a totally real field where $p$ is unramified. Assuming the local representation of $\Pi$ satisfies analogous conditions of Hyp. \ref{hypSet2}, one will probably have to consider  a $2^d\times 2^d$ matrix in the decomposition, where $d$ is the number of primes lying above $p$. Another natural question one may ponder is to what extent can one relax the conditions in Hyp. \ref{hypSet1}, studying representations that are further from being ordinary at $p$. {The results on $\GL_2\times\GL_2$ in \cite{BLLV} suggest that further generalizations may be possible when the local representation can be described using smaller 1-dimensional and 2-dimensional representations upon constructing a larger matrix.}  We plan to study these questions in the future.

\textbf{Signed Iwasawa main conjectures for automorphic representations.} The explicit description of the local representation $V_\Pi$ given by {Proposition}~\ref{MT}  allows us to define signed Coleman maps over $\Qp(\mu_{p^\infty})$, generalizing previous works on {elliptic }modular forms and elliptic curves (e.g.  \cite{kobayashi03,sprung09,lei11compositio,leiloefflerzerbes10}). We utilize the kernels of these Coleman maps as local conditions at $p$ to define signed Selmer groups for $\Pi$ over $\QQ(\mu_{p^\infty})$ (see Definition~\ref{defn:signedSel}) and formulate Iwasawa main conjectures connecting the Pontryagin duals of the signed Selmer groups with the bounded $p$-adic $L$-functions $\sL_p^\#,\sL_p^\flat$ given by Theorem~\ref{MT2} (see Conjecture \ref{conj:IMC}).

The structure of this article is as follows. In Section~\ref{prep}, we introduce  notation and collect preliminary results that will be used in the rest of the article. We study in Section \ref{structure},  the structure of the local Galois representation $V_\Pi$ and prove  {Proposition}~\ref{MT}. We then make use of {Proposition}~\ref{MT} to define bounded signed Coleman maps and study  their interpolation properties in Section~\ref{structure}. Once this is done, we construct the bounded $p$-adic $L$-functions  $\sL_p^\#,\sL_p^\flat$ given in Theorem~\ref{MT2} and explain how to recast the work of Rockwood in \cite{rob} in terms of the $p$-adic $L$-functions we construct.  Section~\ref{Selmer} deals with the construction of signed Selmer groups which are conjecturally cotorsion modules over the cyclotomic Iwasawa algebra.  We finish the article with a discussion on the signed Iwasawa main conjectures and how they are related to the weak Leopoldt conjecture and Perrin-Riou's conjecture on the existence of Euler systems for $\Pi$.

\section*{Acknowledgements}
The authors would like to thank Antonio Cauchi, Rob Rockwood  and Chris Williams for answering many of our questions.    The second named author is indebted to Laurent Clozel, Eknath Ghate, Dipendra Prasad, A. Raghuram and Sujatha Ramdorai for very interesting discussions during the preparation of this article. The first named author's research is supported by the  NSERC Discovery Grants Program RGPIN-2020-04259 and RGPAS-2020-00096. 

Parts of this work were carried out during the thematic semester ``Number Theory -- Cohomology in Arithmetic" at Centre de recherches mathématiques (CRM) in fall 2020. Both authors thank the CRM for the hospitality and very generous supports. The second named author also received fundings from TIFR and the Institute for Advancing Intelligence (IAI), TCG CREST during the revision of the article. Finally, we thank the anonymous referees for very valuable comments and suggestions on  earlier versions of the article.

\section{Preliminaries}\label{prep}
In this section, we review results on automorphic representations and local global compatibility. We also recall Perrin-Riou maps for crystalline representations. This will lay the ground work for the construction of signed Coleman maps and other related objects in subsequent sections.
\subsection{RAESDC automorphic representations}\label{sec:RASEDC}
We say that $\Pi$ is an RAESDC ({regular algebraic, essentially self-dual, cuspidal}) representation of $\GL_{2n}(\A_\Q)$ if $\Pi$ is a cuspidal automorphic representation such that:
\begin{enumerate}
    \item The contragredient $\Pi^{\vee}$ of $\Pi$ satisfies $\Pi^{\vee} \cong \Pi \otimes \chi$ for some Hecke character $\chi: \A_\Q^{\times}/\Q^{\times} \rightarrow \C^\times$;
    \item Writing $\Pi=\Pi_\infty \otimes \Pi_f$, the infinite part $\Pi_\infty$ has the same infinitesimal character as some irreducible algebraic representation of $\GL_{2n}(\R)$.
\end{enumerate}
Let $\mu=(\mu_1,...,\mu_{{2n}}) \in \Z^{2n}$ satisfy the dominant weight condition $$\mu_1 \geq \cdots \geq \mu_{2n}.$$ 
Let $E_\mu$ be the irreducible algebraic representation of $\GL_{2n}(\R)$ with highest weight $\mu$. We say that a RAESDC automorphic representation has weight $\mu$ if $\Pi_\infty$ has the same infinitesimal character as  $E_\mu^\vee$. Let $\mathfrak{g}_\infty=\Lie(\GL_{2n}(\R))$ and $K_\infty$ be the product of a maximal compact subgroup of the real Lie group $\GL_{2n}(\R)$ with the center of $\GL_{2n}(\R)$.
The algebraic regularity condition in  (2) above holds if and only if  $\Pi_\infty$ is \textit{cohomological}, i.e.  $$H^q(\mathfrak{g}_\infty,K_\infty;\Pi_\infty \otimes E_\mu^{}) \neq 0$$
for some degree $q$. Here $H^q(\mathfrak{g}_\infty,K_\infty;\Pi_\infty \otimes E_\mu^{})$ is the space of $(\mathfrak{g}_\infty, K_\infty)$-cohomology in degree $q$ (see \cite[\S I.5]{BorelWallach}). In this case, $\mu$ also satisfies the purity condition
\begin{equation}\label{purity}
\mu_i+\mu_{2n+1-i}=w, \qquad \text{for } i = 1,...,n \text{ and some } w \in \Z.
\end{equation}
Henceforth we assume that $\Pi$ is the transfer of a globally generic cuspidal automorphic representation of $\mathrm{GSpin}_{2n+1}(\A_\Q)$. This functorial transfer has been
established for unitary globally generic cuspidal automorphic representations by Asgari--Shahidi in \cite[Theorem~1.1]{AsgariShahidi1} in its weak form and in \cite[Corollary~4.25]{AsgariShahidi2} at every place. For non-unitary representations $\Pi$, this transfer is discussed in  \cite[p. 686]{GrobnerRaghuram}. Moreover, the transfer of a globally generic cuspidal automorphic representation of $\mathrm{GSpin}_{2n+1}(\A_\Q)$ admits a global Shalika model in the sense of Grobner--Raghuram and are essentially self dual (see \cite[Proposition~3.14]{GrobnerRaghuram}).
\subsection{Strict local-global compatibility}

Let $\rho: G_\Q \rightarrow \GL_{2n}(\overline{\Q}_\ell)$ be a  Galois representation of the absolute Galois group $G_\Q$ of $\Q$.  Assume that $\rho$ is geometric, that is, it is unramified outside a finite set of primes of $\Q$ and its restrictions to the decomposition groups at primes above $\ell$ are potentially semistable in the sense of Fontaine (see for example \cite[\S6.3]{FontaineOuyang}). Let $WD_p$ be the Weil--Deligne group at $p$. For a  geometric representation, one can define a Weil--Deligne representation $WD_p \rightarrow \GL_{2n}(\overline{\Q}_\ell)$ up to conjugacy. This definition is classical for {$p \neq \ell$} and comes from  Deligne--Grothendieck  whereas for {$p=\ell$}, it is due to Fontaine. {A concise survey of both  constructions with references are given in \cite[p. 77-79]{Taylor1}}. 
\begin{defn}\cite[p. 81--82]{Taylor1}, \cite[Section 2]{GhateKumarPublished}
A $\Q$-rational, \textit{strictly compatible} (or \textit{strongly compatible}) system of geometric representations {$(\rho_\ell)$} of $G_\Q$ is a collection of data consisting of: 
\begin{enumerate}
    \item For each prime $\ell$ and each embedding $i: \Q \hookrightarrow \overline{\Q}_\ell$, a continuous, semisimple representation $\rho_\ell:G_\Q \rightarrow \GL_{2n}(\overline{\Q}_\ell)$ that is geometric.
    \item For each prime $p$ of $\Q$, a Frobenius semisimple representation $r_p: WD_p \rightarrow \GL_{2n}(L)$ such that 
    \begin{itemize}
        \item $r_p$ is unramified for all $p$ outside a finite set.
        \item For each $\ell$, the Frobenius semisimple Weil-Deligne representation $WD_p \rightarrow \GL_{2n}(\overline{\Q}_\ell)$ associated to {$\rho_\ell|_{G_{\Q_p}}$}  is conjugate to $r_p$ via the embedding $i: \Q \hookrightarrow \overline{\Q}_\ell$.
        \item There exists a multiset of integers $H$ such that for each prime {$\ell$} and each embedding $i: \Q \hookrightarrow \overline{\Q}_\ell$, the the multiset of Hodge--Tate weights of $\rho_\ell|_{G_{\Q_\ell}}$ is $H$.
    \end{itemize}
\end{enumerate}
\end{defn}
The following conjecture is given in \cite[Conjecture~3.4]{Taylor1} (see also \cite[Conjecture~3.3]{GhateKumarPublished}).
\begin{conjecture}\label{conj:Taylor}
Suppose $\Pi$ is a cuspidal automorphic form on $\GL_{2n}(\A_\Q)$ with infinitesimal character $\chi_H$ where $H$ is a multiset of distinct integers. Then there is a strictly compatible system of Galois representations $(\rho_{\Pi,\ell})$ associated to $\Pi$ with Hodge--Tate weights $H$ such that the local-global compatibility holds for all primes.
\end{conjecture}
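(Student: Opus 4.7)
The plan is to construct the compatible system $(\rho_{\Pi,\ell})$ inside the $\ell$-adic \'etale cohomology of a suitable Shimura variety, and then verify that its local Weil--Deligne representations match the local Langlands parameters of $\Pi$ at every finite prime. Since $\GL_{2n}$ over $\Q$ is not itself the group of a Shimura variety, I would first base change $\Pi$ to an imaginary quadratic extension $E/\Q$ to obtain a conjugate self-dual regular algebraic cuspidal automorphic representation $\tilde{\Pi}$ of $\GL_{2n}(\A_E)$, using cyclic base change due to Arthur--Clozel. The descent theory of Labesse, sharpened by Shin, then produces a discrete automorphic representation of a quasi-split unitary similitude group whose associated Shimura variety is of PEL type and which transfers back to $\tilde{\Pi}$ under stable base change.

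Next I would cut out the sought Galois representation as the $\Pi_f$-isotypic component of the \'etale cohomology of these PEL Shimura varieties, following the program of Kottwitz, Harris--Taylor, Taylor--Yoshida, Shin, Chenevier--Harris, and Caraiani. The resulting family is automatically strictly compatible at unramified primes by the Kottwitz conjecture, the Hodge--Tate weights coincide with the shifted highest weight by Faltings's comparison theorem, and the passage from $\tilde{\Pi}$ back to $\Pi$ is carried out by varying the auxiliary quadratic extension and patching via the Chebotarev density theorem.

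The main obstacle is local--global compatibility at the ramified primes $q$ of $\Pi$. For $q \neq \ell$ one cannot read off the monodromy operator directly from \'etale cohomology; here one argues via Taylor--Yoshida's trick of congruences with Steinberg-type automorphic forms combined with solvable base change, a strategy pushed through to its full form by Caraiani and Shin. For $q$ lying above $\ell$, translating the Fontaine $(\varphi, N)$-module attached to $\rho_{\Pi,\ell}|_{G_{\Q_\ell}}$ into the expected Langlands parameter requires the $p$-adic Hodge theory and potential automorphy machinery of Barnet-Lamb--Gee--Geraghty--Taylor. If one insists on the full generality of the conjecture, dropping the essential self-duality implicit in the RAESDC setting of this paper, the Shimura-variety strategy fails entirely, and one must instead invoke Scholze's construction via torsion in the cohomology of locally symmetric spaces; local--global compatibility in that framework remains only partially established, notably through the work of Varma and of Newton--Thorne, and a complete proof of the conjecture as stated would rest on closing this gap.
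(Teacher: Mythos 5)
This statement is labelled as a \emph{conjecture} in the paper (Conjecture~\ref{conj:Taylor}); the paper offers no proof of it and explicitly attributes it to Taylor, citing \cite{Taylor1} and \cite{GhateKumarPublished}. The paper only remarks afterwards that significant evidence exists in the RAESDC case, via Clozel--Harris--Taylor for $p \neq \ell$ and Barnet-Lamb--Gee--Geraghty--Taylor for $p = \ell$, and these partial results are what the paper actually invokes later (Theorem~\ref{Thm:automorphy}). There is therefore no ``paper's own proof'' to compare against: you are attempting to prove an open conjecture.

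That said, your sketch is a reasonable and largely accurate survey of the known strategy in the essentially self-dual case (base change to a CM field, descent to unitary similitude groups, \'etale cohomology of PEL Shimura varieties, patching over auxiliary quadratic extensions, Taylor--Yoshida/Caraiani--Shin for $q\neq\ell$, and BLGGT-type $p$-adic Hodge theory for $q=\ell$), and you are right that dropping self-duality forces one into Scholze's torsion cohomology construction, where local--global compatibility is not fully settled. Your own closing sentence correctly identifies the gap: what you have written is not a proof of the conjecture as stated, and indeed no such proof exists in the literature. The paper itself does not need the conjecture in full generality; it works with the partial results of Theorem~\ref{Thm:automorphy}, which are genuine theorems under the RAESDC (and in fact Shalika/GSpin-transfer) hypotheses imposed in Section~\ref{sec:RASEDC}. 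So the honest conclusion is that your proposal does not prove the statement, cannot be expected to, and is better read as a literature summary of why the restricted form used in the paper is known.
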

Here, local-global compatibility means that the underlying {Frobenius  semi-simplified} Weil--Deligne representation at $p$ in the compatible system (which is independent of the residue characteristic $\ell$ of the coefficients by hypothesis) corresponds to $\Pi_p$ via the local Langlands correspondence. 

 There is  a significant evidence towards this conjecture for RASEDC representations, thanks to the works of Clozel,  Harris, Taylor for {$p \neq \ell$} \cite{ClozelHarrisTaylor},  and   Barnet-Lamb, Gee, Geraghty and Taylor for {$p=\ell$} \cite{LambGeeGeraghtyTaylorII}.

\begin{conv}
We normalize the Hodge--Tate weights on $G_{\Qp}$-representations so that the cyclotomic character  has Hodge--Tate weight $+1$.
\end{conv}

\begin{theorem}\label{Thm:automorphy}
Let $\iota: \overline{\Q}_\ell \cong \C$.
Suppose $\Pi$ is an RASEDC automorphic representation of $\GL_{2n}(\A_\Q)$ of weight $\mu$ as in Section \ref{sec:RASEDC}. Then there is a number field $\mathrm{E}(\Pi)$ and a compatible system $\rho_{\Pi,\lambda, \iota}: G_\Q \rightarrow \GL_{2n}(\mathrm{E}(\Pi)_\lambda)$ of continuous $\lambda$-adic representations where $\lambda$ runs through finite places of $\mathrm{E}(\Pi)$, such that 
\begin{enumerate}
    \item If $p$ is coprime to the prime $\ell$ dividing the norm $N_{\mathrm{E}(\Pi)/\mathbb{Q}}(\lambda)$, we have $${WD(\rho_{\Pi,\lambda, \iota}|_{G_{\mathbb{Q}_p}})^{\mathrm{ss}}}=\Big(r_\ell\big(\Pi_p\circ \iota\big)^{\vee}(1-2n)\Big)^{\mathrm{ss}}$$
    where $r_\ell$ is the reciprocity map defined in \cite{HarrisTaylorGeometry}.
    \item If $p$ divides $N_{\mathrm{E}(\Pi)/\Q}(\lambda)$, and if $\Pi_p$ is unramified, the representation $\rho_{\Pi,\lambda, \iota}|_{G_{\Q_p}}$ is crystalline with Hodge--Tate weights $-h_i=-(\mu_i+2n-i)$ for $i=1,...,2n$ and each of these Hodge--Tate weights have multiplicity one. (the minus signs arise since the weights are the negatives of the jumps in the Hodge filtration on the associated filtered $\varphi$-module $\Dcris(\rho_{\Pi,\lambda, \iota}|_{G_{\Q_p}})$ constructed by Fontaine, see \cite[Definition 6.29]{FontaineOuyang}). {Furthermore, 
      the characteristic polynomial of  $\varphi$  equals the characteristic polynomial of the geometric Frobenius at $p$ of the Weil--Deligne representation  $r_p\big(\Pi_p\circ \iota\big)^{\vee}(1-2n)$.}
    \item The local-global compatibility holds at primes  $p$ dividing $N_{\mathrm{E}(\Pi)/\Q}(\lambda)$. That is, $$\iota WD(\rho_{\Pi,\lambda, \iota}|_{G_{\mathbb{Q}_p}})^{\mathrm{ss}}=\mathrm{rec}(\Pi_p \otimes |\det|^{\frac{1-2n}{2}})^{\mathrm{ss}},$$
    where $WD(\rho_{\Pi,\lambda, \iota}|_{G_{\mathbb{Q}_p}})$ is the Weil--Deligne representation associated to $\rho_{\Pi,\lambda, \iota}|_{G_{\mathbb{Q}_p}}$ and $\mathrm{rec}$ is the local Langlands correspondence \cite{HarrisTaylorGeometry}.
\end{enumerate}
\end{theorem}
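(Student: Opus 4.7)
The plan is to assemble the three statements from the literature on Galois representations attached to RAESDC automorphic forms, rather than to give an independent argument. The key structural input is that $\Pi$, being the transfer of a globally generic cuspidal representation of $\mathrm{GSpin}_{2n+1}(\A_\Q)$, is essentially self-dual and admits a global Shalika model, as discussed in Section~\ref{sec:RASEDC}. This is precisely the setting in which a compatible system of $\lambda$-adic representations can be constructed.

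First, I would invoke the construction in \cite{ClozelHarrisTaylor}: using quadratic base change to a CM field and descent to a suitable unitary similitude group, one realizes each $\rho_{\Pi,\lambda,\iota}$ as a Galois summand of the $\ell$-adic cohomology of an appropriate unitary Shimura variety. This directly yields continuity, geometricity, and mutual compatibility of the system. Part~(1) then follows from the Frobenius-semisimple local-global compatibility at primes $p\nmid\ell$ proved in loc.~cit., with the twist by $(1-2n)$ reflecting the normalization of $r_\ell$ together with the polarization pairing arising in the descent. For part~(2), since $\Pi_p$ is unramified and $p=\ell$, one realizes $\rho_{\Pi,\lambda,\iota}|_{G_{\Qp}}$ via a correspondence on a smooth proper integral model at $p$; the $C_{\mathrm{cris}}$-comparison theorem then forces crystallinity. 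The Hodge--Tate weights $h_i=\mu_i+2n-i$ are extracted from $\mu$ by comparing the $(\mathfrak{g}_\infty,K_\infty)$-cohomology of $\Pi_\infty\otimes E_\mu^\vee$ with the Hodge filtration on $\Dcris$, the sign being dictated by the convention that $\chi_{\mathrm{cyc}}$ has weight $+1$. The identification of the characteristic polynomial of $\varphi$ with that of the Satake parameters then follows from the unramified local Langlands correspondence together with the unramified case of the comparison isomorphism.

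The hardest step is part~(3), the full (non-semisimple) local-global compatibility at $p=\ell$; for this I would invoke \cite{LambGeeGeraghtyTaylorII}, where the result is established through a combination of automorphy lifting and $p$-adic approximation. This is historically the principal obstacle in such compatibility theorems, but under our standing hypothesis that $\Pi$ is unramified at $p$ the monodromy operator vanishes, so (3) effectively reduces to the matching of Frobenius eigenvalues already treated in part~(2). In other words, the genuinely new content required in our setting is minimal: all three assertions are available off-the-shelf, and the proof amounts to citing the correct reference for each normalization.
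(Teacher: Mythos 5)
Your proposal is correct and follows essentially the same strategy as the paper: assemble the statement from the existing literature rather than give an independent argument. The paper's own proof is a pure citation, attributing parts (1) and (2) to Chenevier--Harris \cite[Theorem~4.2]{ChenevierHarris} and Geraghty \cite[Proposition~2.27]{Geraghty} (see also \cite[Theorem~3.5]{GhateKumarPublished}), with the $p\ne\ell$ case due to Clozel--Harris--Taylor \cite[Proposition~4.3.1]{ClozelHarrisTaylor}, and part (3) to \cite[Theorem~A]{LambGeeGeraghtyTaylorII}; you give the two key citations (Clozel--Harris--Taylor and Barnet-Lamb--Gee--Geraghty--Taylor) but omit Chenevier--Harris and Geraghty for the crystalline case $p=\ell$, which is where the Hodge--Tate weight computation and the characteristic polynomial of $\varphi$ actually live. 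Your sketch of how those facts would be deduced from a smooth proper integral model and the comparison theorem is plausible in spirit but is substituting an argument outline for the precise references that actually carry the load here.
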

\begin{proof}
For (1) and (2), see Chenevier--Harris \cite[Theorem~4.2]{ChenevierHarris} and Geraghty \cite[Proposition~2.27]{Geraghty}. See also \cite[Theorem~3.5]{GhateKumarPublished}. The case of $p$ being coprime to  $N_{\mathrm{E}(\Pi)/\mathbb{Q}}(\lambda)$ is  due to Clozel--Harris--Taylor \cite[Proposition~4.3.1]{ClozelHarrisTaylor}.
For (3), see  \cite[Theorem~A]{LambGeeGeraghtyTaylorII}.
\end{proof}
\begin{remark}\label{conjugation}
As $2n$ is even, we note that $\mathrm{trace} (\rho_{\Pi,\lambda, \iota}(c))=0$, where $c$ is the complex conjugation (see \cite[Theorem~1.1]{CaraianiHung}).  
\end{remark}
\subsection{The Perrin-Riou map for local representations}\label{PRmapsprep}
 Let $\Gamma=\Gal(\QQ(\mu_{p^\infty})/\QQ)\cong\Delta\times\Gamma_1$, where $\Delta\cong\ZZ/(p-1)\ZZ$ and $\Gamma_1\cong\Zp$. We choose a topological generator $\gamma$ of $\Gamma_1$. We fix a finite extension $F$ of $\Qp$, whose ring of integers is denoted by $\cO$.

We write $\Lambda(\Gamma)=\cO\lb \Gamma\rb$ and  $\Lambda(\Gamma_1)=\cO\lb\Gamma_1\rb$ for the Iwasawa algebra of $\Gamma$ and $\Gamma_1$ over $\cO$ respectively. We consider $\Lambda(\Gamma)$ and $\Lambda(\Gamma_1)$ as  subrings of  $\cH(\Gamma)$ and $\cH(\Gamma_1)$, which are  the rings of power series $f \in F[\Delta]\lb X \rb$ (respectively $f \in F\lb X \rb$) which converge on the open unit disc $|X| < 1$ {in $\Cp$, where $|\ \ |$ denotes the $p$-adic norm on $\Cp$ normalized by $|p|=p^{-1}$}. For any real number $r\ge 0$, we write  $\cH_{r}(\Gamma)$ and $\cH_{r}(\Gamma_1)$ for the set of power series $f$ in $\cH(\Gamma)$ and $\cH(\Gamma_1)$ respectively satisfying $\sup_{t} p^{-tr}\Vert f\Vert_{\rho_t}<\infty$,
where $\rho_t=p^{-1/p^{t-1}(p-1)}$ {and $\Vert f\Vert_{\rho_t}=\sup_{|z|\le \rho_t}|f(z)|$}. It is common to write $f=O(\log_p^r)$ when $f$ satisfies this condition.

 Given an integer $i$, we write $\Tw^i$ for the $\Qp$-algebra automorphism of $\cH(\Gamma)$ defined by $\sigma \mapsto \cc^i(\sigma)\sigma$ for $\sigma\in \Gamma$.
We set $u:=\cc(\gamma)$. If $m\ge 1$ is an integer, we define $\Phi_m$ to be the $p^m$-th cyclotomic polynomial in $1+X$, namely $\displaystyle\frac{(1+X)^{p^m}-1}{(1+X)^{p^{m-1}}-1}$.
We let $\log_p=\log_p(1+X) \in \cH(\Gamma_1)$ denote the $p$-adic logarithm.
We also define for an integer $i$, the element 
\begin{equation}\label{eq:ell}
   \ell_i=\frac{\log_p}{\log_p(u)}-i=\frac{\Tw^{-i}\left(\log_p\right)}{\log_p(u)}\in \cH(\Gamma_1) 
\end{equation}
For $i\ge0$, we define the product $$\tell_i=\prod_{j=0}^{i-1}\ell_j.$$
Note that for $i=0$, we have $\tell_0=1$.

\begin{defn}\label{defn:basic-objects}
Let $T$ be a $G_{\Qp}$-stable $\cO$-lattice of a finite dimensional $F$-linear crystalline representation $V$ of $G_{\Q_p}$ with non-negative Hodge--Tate weights such that $V$ has no sub-quotient isomorphic to the trivial representation $F$. 
\item[i)] We define $\HIw(\Qp,T)$ to be the inverse limit $\varprojlim H^1(\Qp(\mu_{p^m}),T)$, where the connecting maps are corestrictions.% We also define $\HIw(\Qp,V)$ to be $\HIw(\Qp,T)\otimes\Qp$.
\item[ii)] We write $\NN(T)$ for the Wach module of $T$ (see for example \cite[\S II.1]{berger04} for the precise definitions; it is a filtered module over the ring $\cO[[\pi]]$, where $\pi$ can be regarded as a formal variable  equipped with an action of $\vp$ and $\Gamma$ given by $\vp(\pi)=(1+\pi)^p-1$ and $\sigma(\pi)=(1+\pi)^{\cc(\sigma)}-1$ for $\sigma\in\Gamma$)\footnote{{We recall that $\pi$ is in the ring of of Witt vectors of $\varprojlim_{x\mapsto x^p}\Cp$ given by $[(1, \zeta_p,\zeta_{p^2},...)]-1$, where $\zeta_{p^n}$ is a primitive $p^n$-th root of unity in $\Cp$ such that $\zeta_{p^{n+1}}^p=\zeta_{p^n}$. It is used to define Fontaine's period $t=\log(1+[\pi])\in \mathbb{B}_\mathrm{dR}^+$.}}. We have the integral Dieudonn\'e module $\Dcris(T)$ given by $\NN(T)$ modulo $\pi$.
\item[iii)]We write $\cL_T:\HIw(\Qp,T)\rightarrow \cH(\Gamma)\otimes\Dcris(T)$ %and $\cL_V:\HIw(\Qp,V)\rightarrow \cH(\Gamma)\otimes\Dcris(V)$
for the Perrin-Riou map as defined in \cite[\S3.1]{leiloefflerzerbes11} and \cite[Appendix B]{LZ0} {(see also our discussion below for a review of this map)}.
\end{defn}

 Let $\psi$ dentoe the left inverse of $\vp$ as given in \cite[\S I.2]{berger03}. We write $\Brig$ for the ring of power series in $F[[\pi]]$ that converge on the open unit disk. Let  $t=\log_p(1+\pi)\in \Brig$ and $q=\vp(\pi)/\pi$. We recall that the Mellin transform $\mathfrak{m}$ sending $1$ to $1+\pi$ induces the $\Lambda(\Gamma)$-isomorphisms
\[
\Lambda(\Gamma)\stackrel{\sim}{\longrightarrow}\cO[[\pi]]^{\psi=0},\qquad\cH(\Gamma)\stackrel{\sim}{\longrightarrow}\left(\Brig\right)^{\psi=0}.
\]

{Note that the condition $\psi=0$ simply cuts out the distributions supported on $\Gamma=\Z_p^\times \subset \Z_p$ which is where these natural Mellin transform isomorphisms come from.}

We fix a $\Zp$ basis $e_1$ for the $G_{\Qp}$-representation $\Zp(1)$. Given an integer $k$, we write $e_k=e_1^{\otimes k}$. We have the identifications
\begin{align*}
    \HIw(\Qp,T(k))&=\HIw(\Qp,T)\cdot e_k,\\
    \Dcris(T(k))&=\Dcris(T)\cdot t^{-k} e_k,\\
    \NN(T(k))&=\NN(T)\cdot \pi^{-k}e_k.
    \end{align*}

We recall from \cite[Theorem~A.3]{berger03} and \cite[\S1.3]{BB} that the assumption that the  Hodge--Tate weights are non-negative  implies that there is an isomorphism of $\Lambda(\Gamma)$-modules
\[
    \HIw(\Qp,T)\cong \NN(T)^{\psi=1}
\]
{via the Herr complex. Here, the superscript $\psi=1$ signifies the kernel of the morphism $\psi-1$.}
The Perrin-Riou map {$\cL_T:\HIw(\Qp,T)\rightarrow \cH(\Gamma)\otimes\Dcris(T)$ in Definition~\ref{defn:basic-objects}} can be defined via this isomorphism composed with
\[
\NN(T)^{\psi=1}\stackrel{1-\vp}{\longrightarrow}(\Brig)^{\psi=0}\otimes\Dcris(T)\stackrel{\mathfrak{m}^{-1}\otimes 1}{\longrightarrow}\cH(\Gamma)\otimes\Dcris(T)
\]
{(see \cite[Proposition 1.1]{leiloefflerzerbes10}).}
We will also need the following notations while constructing Selmer groups in Section \ref{Selmer}.

\begin{enumerate}
\item[i)]Given a $\cO$-module $M$, we  write $M^\vee=\Hom_{\mathrm{cts}}(M,F/\cO)$ for its Pontryagin dual.
\item[ii)]For a finitely generated torsion $\Lambda(\Gamma_1)$-module $M$,  let $\ch_{\Lambda(\Gamma_1)}(M)$ denote the characteristic ideal of $M$. 
\item[iii)]For a Dirichlet character $\eta: \Delta \rightarrow \Z_p^\times$, we define $e_\eta$ to be the idempotent attached to $\eta$ given by $\frac{1}{p-1}\sum_{\sigma \in \Delta}\eta(\sigma)^{-1}\sigma \in \Z_p[\Delta].$
\item[iv)]Given a $\Lambda(\Gamma)$-module $M$ and a character $\eta$ as above, we define its $\eta$-isotypic component to be
\[
M^\eta=e_\eta\cdot M,
\]
which we consider as a $\Lambda(\Gamma_1)$-module.
\item[v)]Given an element $f=\sum_{k \geq 0, \sigma \in \Delta}a_{k, \sigma}\cdot \sigma \cdot (\gamma -1)^k \in \cH(\Gamma)$, where $a_{k, \sigma} \in F$ and  $\gamma$ is a topological generator of $\Gamma_1$, we may decompose $f$ into $\sum_{\eta\in\hat\Delta} f^\eta$, where  $f^\eta=e_\eta\cdot f\in\cH(\Gamma)^\eta$. Furthermore, we may identify $f^\eta$  with an element of $\cH(\Gamma_1)$ given by $\sum_{k\geq 0}\Big( \sum_{\sigma \in \Delta}a_{k, \sigma}\eta(\sigma) \Big) (\gamma -1)^k$.
 \end{enumerate}

\section{Structure of non-ordinary local Galois representation at $p$}\label{structure}
%The main goal of this section is to prove  Theorem \ref{MT}.

Fix an integer $n \geq 1$ and set $G = \GL_{2n}$. Let $T$ be the standard diagonal split torus of $G$ and $B$ the upper triangular Borel subgroup.
Now suppose $\Pi$ is a RAESDC automorphic representation of $\GL_{2n}(\A_\Q)$ which is the transfer of a globally generic cuspidal automorphic representation of $\mathrm{GSpin}_{2n+1}(\A_\Q)$ and suppose $\Pi_p$ is unramified. Let us suppose, there is an unramified character $$\chi_p: T(\mathbb{Q}_p) \rightarrow \mathbb{C}^\times$$ such that $$\Pi_p=\mathrm{Ind}_{{B}(\mathbb{Q}_p)}^{G(\mathbb{Q}_p)}(|\cdot|^{\frac{2n-1}{2}}\chi_p).$$
We define the Satake parameters at $p$ to be the values $\alpha_i=\chi_{p,i}(p)$, where the $\chi_{p,i}$'s
denote the projections to the diagonal entries. After choosing an isomorphism $\Q_p \cong \C$, the indices $i$ are ordered so that $\ord_p(\alpha_1)\ge \ord_p(\alpha_2)\ge\cdots \ge \ord_p(\alpha_{2n})$. For all $i$, we also have 
\begin{equation} \label{alphaParity}
\alpha_i \alpha_{2n+1-i}=\lambda
\end{equation}
for fixed $\lambda$ with $p$-adic valuation $2n-1{+w}$. As in the introduction, we take $F=E(\Pi)_\lambda$, where $\lambda$ is a prime of $E(\Pi)$ above $p$. By extending scalars if necessary, we assume that all $\alpha_i$ are contained in $F$.
 %Then we have the following lemma in the $(p,p)$ case.
\begin{lemma}\label{lem:phi}
When $p=\ell$, the eigenvalues of $\varphi$ on $\Dcris(\rho_{\Pi,\lambda, \iota}|_{G_{\Q_p}})$ are given by $\alpha_i$ for $i=1,...,2n$. 
\end{lemma}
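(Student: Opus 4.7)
The plan is to invoke part (2) of Theorem~\ref{Thm:automorphy} and then unpack the right-hand side of the comparison of characteristic polynomials using the explicit description of $\Pi_p$ as an unramified principal series. Since $\Pi_p$ is unramified, it certainly has a nonzero vector fixed by $\GL_{2n}(\Z_p)$, so the last bullet of Theorem~\ref{Thm:automorphy}(2) applies: the characteristic polynomial of $\varphi$ acting on $\Dcris(\rho_{\Pi,\lambda, \iota}|_{G_{\Q_p}})$ equals the characteristic polynomial of geometric Frobenius at $p$ on the Weil--Deligne representation $r_p(\Pi_p \circ \iota)^\vee(1-2n)$. Since $\varphi$ acts on a $2n$-dimensional $F$-vector space, it suffices to identify this latter characteristic polynomial as $\prod_{i=1}^{2n}(X-\alpha_i)$.

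Next I would compute the Weil--Deligne representation explicitly. Because $\Pi_p = \Ind_{B(\Q_p)}^{G(\Q_p)}(|\cdot|^{(2n-1)/2}\chi_p)$ is an unramified principal series, the local Langlands / Satake correspondence sends $\Pi_p$ to the direct sum of one-dimensional unramified Weil--Deligne representations attached to the characters $|\cdot|^{(2n-1)/2}\chi_{p,i}$, on which geometric Frobenius acts by $p^{(2n-1)/2}\alpha_i$ under the motivic normalization used by Harris--Taylor. Taking contragredients replaces these eigenvalues by $p^{-(2n-1)/2}\alpha_i^{-1}$, and twisting by the Tate motive $(1-2n)$ multiplies each by the geometric Frobenius eigenvalue on $\Q_p(1-2n)$. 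Combining the factors, the eigenvalues become $p^{(2n-1)/2}\alpha_i^{-1}\cdot p^{-(1-2n)}/p^{(2n-1)}=\alpha_i^{-1}\cdot\lambda$ (using the scalar $\lambda$ from \eqref{alphaParity}), and the duality relation $\alpha_i\alpha_{2n+1-i}=\lambda$ shows that the multiset $\{\alpha_i^{-1}\lambda\}_i$ is exactly $\{\alpha_i\}_i$. Hence the characteristic polynomial is $\prod_{i=1}^{2n}(X-\alpha_i)$, which matches the claim.

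The main obstacle, and really the only thing to check carefully, is the bookkeeping of normalizations: arithmetic versus geometric Frobenius, the Tate shift built into the reciprocity map $r_p$ of Harris--Taylor relative to the unitary normalization $\mathrm{rec}$ of Theorem~\ref{Thm:automorphy}(3), and the cyclotomic twist $(1-2n)$. Each of these contributes a power of $p^{1/2}$ (or a sign) to the Frobenius eigenvalues, and they must cancel exactly so that the half-integral exponents disappear and the residual multiset is $\{\alpha_i\}$. The identity \eqref{alphaParity} together with the purity condition \eqref{purity} is what makes this cancellation clean, since it lets us trade an inverse $\alpha_i^{-1}$ for $\alpha_{2n+1-i}/\lambda$ and thereby match the two sides of the comparison on the nose.
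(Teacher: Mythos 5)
Your high-level strategy matches the paper's: you invoke part~(2) of Theorem~\ref{Thm:automorphy} after noting that an unramified $\Pi_p$ has a $\GL_{2n}(\Zp)$-fixed vector, and you reduce the lemma to identifying the characteristic polynomial of geometric Frobenius on $r_p\big(\Pi_p\circ\iota\big)^\vee(1-2n)$ as $\prod_{i=1}^{2n}(X-\alpha_i)$. The paper does exactly this and then simply records the output, citing the discussion on p.~393 of Ghate--Kumar for the normalization bookkeeping. You instead try to carry out the bookkeeping yourself, and that is where the argument breaks.

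The concrete problem is your combined eigenvalue computation. You write that after the contragredient and the $(1-2n)$ twist the eigenvalues become
$p^{(2n-1)/2}\alpha_i^{-1}\cdot p^{-(1-2n)}/p^{2n-1}=\alpha_i^{-1}\lambda$.
The left side simplifies to $p^{(2n-1)/2}\alpha_i^{-1}$, so you are tacitly asserting $p^{(2n-1)/2}=\lambda$. This is false: $\lambda$ is not a pure power of $p$ (it is the scalar $\alpha_i\alpha_{2n+1-i}$ of $p$-adic valuation $2n-1-w$, a specific algebraic number), and $(2n-1)/2$ is not even an integer. The factor of $\lambda$ that you want does not come from the Tate twist and the unitary normalization alone; it comes from the essential self-duality $\Pi^\vee\cong\Pi\otimes\chi$. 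When you dualize $\Pi_p$, the Satake parameters do not simply invert -- they invert and are simultaneously rescaled by the central character, which is precisely what produces the $\lambda^{-1}$ that makes $\{\alpha_i^{-1}\}$ equal to $\{\alpha_{2n+1-i}\lambda^{-1}\}$. Your step~2 ("taking contragredients replaces these eigenvalues by $p^{-(2n-1)/2}\alpha_i^{-1}$") silently drops this term. Your closing observation that \eqref{alphaParity} gives $\alpha_i^{-1}\lambda=\alpha_{2n+1-i}$, and hence a multiset equality, is correct as an identity, but your intermediate steps never actually produce the factor $\lambda$, so the argument as written does not reach it. Note also that the paper's displayed formula avoids the whole rearrangement: the unitary shift $|p|^{(2n-1)/2}$ in the Satake parameter of $\Pi_p$ cancels on the nose against the $p^{(2n-1)/2}$ coming from the Harris--Taylor normalization of $r_p$ together with the $(1-2n)$ twist, giving the eigenvalues $\chi_{p,i}(p)=\alpha_i$ directly with no permutation of indices and no appeal to \eqref{alphaParity} at all.
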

\begin{proof}
{This is a direct consequence of Theorem \ref{Thm:automorphy}}.\end{proof}
%Since $\Pi_p$ is an unramified principal series, by \cite[Lem. 3.1.1]{ClozelHarrisTaylor}, we know that $\Pi_p$ has a nonzero vector fixed by the maximal compact subgroup $\GL_{2n}(\Z_p)$ and therefore satisfies the hypothesis in part (2) of Theorem \ref{Thm:automorphy}. Therefore, the eigenvalues of $\varphi$ are precisely the eigenvalues of the geometric Frobenius at $p$ of $r_p\big(\Pi_p\circ \iota\big)^{\vee}(1-2n)$. 

%The characteristic polynomial of the geometric Frobenius at $p$ of $r_p\big(\Pi_p\circ \iota\big)^{\vee}(1-2n)$ is given by $$\prod_{i=1}^{2n}(X-|p|^{\frac{2n-1}{2}}\chi_p(p)p^{\frac{2n-1}{2}})=\prod_{i=1}^{2n}(X-\alpha_i)$$ (see the discussion in  \cite[p.393]{GhateKumarPublished}).

%This completes the proof of the lemma.

 Henceforth, we shall write $V_\Pi$ for the representation {$\rho_{\Pi,\lambda,\iota}^*$. By an abuse of notation, we may write $V_\Pi$ to denote $V_\Pi|_{G_{\Qp}}$ when no confusion arises.} We recall from Theorem~\ref{Thm:automorphy} that the jumps in the Hodge filtration of $V_\Pi$ are  $-(\mu_i+2n-i)$. {Therefore,} the $\vp$-eigenvalues on $\Dcris(V_\Pi)$ are given by $\alpha_i^{-1}$. 

%\subsection{Setup} \label{sec:setup}
We work under the following hypotheses.
\begin{hyp}\label{hypSet1}
We assume that the Hodge--Tate weights {and the Satake parameters of} $\Pi$ at $p$ satisfy the following hypotheses.

\begin{itemize}
\item[\normalfont(M.Slo)]$\ord_p(\alpha_i)=h_i,\ i=n+2,\ldots, 2n$;
\item[\normalfont(N.ord)]{$\ord_p(\alpha_{n+1})>h_{n+1}$} and $\alpha_n\ne \alpha_{n+1}$;
    \item[\normalfont(G.Po)] The Newton filtration on $\Dcris(V_\Pi)$ is in general position with respect to the Hodge filtration.
\end{itemize}
\end{hyp}
%Let us first briefly discuss the significations of these hypotheses.

The hypothesis (M.Slo) is the ``minimal slope"  hypothesis in \cite[\S3]{rob}. As the Newton polygon of $\Pi$ lies on or above the Hodge polygon of $\Pi$  and their end points coincide (see Hida's work \cite[Section 8.2]{Hida1}), it is easy to see from \eqref{purity} and \eqref{alphaParity} that  (M.Slo) implies 
\begin{equation}\label{lem:slope}
\ord_p(\alpha_i)=h_i \text{ for } i=1,\ldots ,n-1.
\end{equation}
Hence the Newton and the Hodge polygons coincide at all points except the $n$-th point. 

{Note that the inequality  $\ord_p(\alpha_{n+1})>h_{n+1}$ in  (N.ord) implies that  $$h_n>\ord_p(\alpha_n),\ord_p(\alpha_{n+1})>h_{n+1}$$ (once again thanks to \eqref{purity} and \eqref{alphaParity}). We remark} that the hypothesis (N.ord) is a weaker condition than the Pollack condition $\alpha_n+\alpha_{n+1}=0$ considered in  \cite{rob}. Indeed, the Pollack condition  has the consequence that $\ord_p(\alpha_n)=\ord_p(\alpha_{n+1})=\frac{1}{2}(h_n+h_{n+1})$, which is a special case of {$\ord_p(\alpha_{n+1})>h_{n+1}$}.

%Our hypotheses wilthat l tell us  that certain 2-dimensional sub-quotient of $V_\Pi$ is a non-ordinary representation (see  Theorem~\ref{thm:upper-tri} below).

Hypothesis (G.Po) is \cite[Assumption~3.6]{GhateKumarPublished}, which means that given a $d$-dimensional $\vp$-stable subspace of $\Dcris(V_\Pi)$, the jumps in its Hodge filtration occur at the first  $d$  jumps in the Hodge filtration of $\Dcris(V_\Pi)$. As discussed in op. cit., (G.Po) is expected to hold generically.

We are now ready to prove {Proposition}~\ref{MT}.

\begin{proposition}\label{thm:upper-tri}
Assume that Hyp \ref{hypSet1} holds. The local representation ${V_\Pi|_{G_{\Qp}}}$ is isomorphic to a representation of the form
\[
\begin{pmatrix}
\cc^{h_{1}}\theta_{1} &*&\cdots&*&*&*&*&\cdots&*&*\\
0&\cc^{h_{2}}\theta_{2} &\cdots&*&*&*&*&\cdots&*&*\\
\vdots&\ddots&\ddots&&\vdots&\vdots&\vdots&&\vdots&\vdots\\
0&\cdots&0&\cc^{h_{n-1}}\theta_{n-1}&*&*&*&\cdots&*&*\\
0&\cdots&\cdots&0 &\times&\times&*&\cdots&*&*\\
0&\cdots&\cdots&0 &\times&\times&*&\cdots&*&*\\
0&\cdots&\cdots&0 &0&0&\cc^{h_{n+2}}\theta_{n+2}&\cdots&*&*\\
\vdots&&& \vdots&\vdots&\vdots&\ddots&\ddots&\vdots&\vdots\\
0&\cdots&\cdots&0 &0&0&\cdots&0&\cc^{h_{2n-1}}\theta_{2n-1}&*\\
0&\cdots&\cdots&0 &0&0&\cdots&\cdots&0&\cc^{h_{2n}}\theta_{2n}
\end{pmatrix},
\]
where $\cc$ denotes the $p$-adic cyclotomic character, $\theta_i$ are unramified characters on $G_{\Qp}$ and the middle $2\times 2$ square, given by the symbol $\times$, is a two-dimensional sub-quotient $V_\Pi'$ of $V_\Pi$ whose Hodge--Tate weights are $h_n$ and $h_{n+1}$. Furthermore, the $\vp$-eigenvalues on $\Dcris(V_\Pi')$ are $\alpha_{n}^{-1}$ and $\alpha_{n+1}^{-1}$.
\end{proposition}
\begin{proof}
%\JR{This proposition is essentially a consequence of \cite{LambGeeGeraghtyTaylorII} under the hypothesis Hyp \ref{hypSet1}. However, we include a detailed proof for a non-expert reader.}
Given that $h_i=\mu_i+2n-i$ and $\mu_1\ge\cdots\ge\mu_{2n} $, we have
\[
h_1>\cdots >h_{2n}.
\]
This, together with (\ref{lem:slope}) and (M.Slo) imply that
\[
\ord_p(\alpha_1)>\cdots >\ord_p(\alpha_{n-1})>\ord_p(\alpha_{n}),\ord_p(\alpha_{n+1})>\ord_p(\alpha_{n+2})>\cdots > \ord_p(\alpha_{2n}).
\]
Therefore, the characteristic polynomial of $\vp|_{\Dcris(V_\Pi)}$ factors 
into the product
\[
Q(X)\prod_{\substack{1\le i \le n-1\\ n+2\le i\le 2n}}(X-\alpha_i^{-1}),
\]
where $Q(X)$ is a monic quadratic polynomial defined over $F$ whose roots  are $\alpha_n^{-1}$ and $\alpha_{n+1}^{-1}$. This tells us that 
for $1\le i\le n-1$ and $n+2\le i\le 2n$,  the $\vp$-eigensubspace of $\Dcris(V_\Pi)$ for the eigenvalue $\alpha_i^{-1}$, which we denote by $E_i$, is a one-dimensional $F$-subspace of $\Dcris(V_\Pi)$. Let $E'$ denote the subspace of $\Dcris(V_\Pi)$ given by the kernel of $Q(\vp)$. Since the $\alpha_i$'s are all distinct, we have
\[
\Dcris(V_\Pi)=E'\oplus\bigoplus_{\substack{1\le i \le n-1\\ n+2\le i\le 2n}}E_i.
\]

We define the following subspaces of $\Dcris(V_\Pi)$:
\begin{align*}
D_i&=\bigoplus_{j=1}^{i}E_j,\qquad 1\le i\le n-1,\\
D_{n}&=D_{n+1}=D_{n-1}\oplus E',\\
D_i&=D_n\oplus \bigoplus_{j=n+2}^{i}E_j,\quad n+2\le i\le 2n.
\end{align*}
This gives the filtration
\[
0=:D_{0}\subsetneq D_{1}\subsetneq\cdots \subsetneq D_{n-1}\subsetneq D_{n}=D_{n+1}\subsetneq D_{n+2}\subsetneq\cdots \subsetneq D_{2n-1}\subsetneq D_{2n}=\Dcris(V_\Pi).
\]
Then, by the hypothesis (G.Po) and (\ref{lem:slope}), for all $1\le i\le 2n$, $D_i$ is an admissible filtered $(\vp,N)$-module with $t_N(D_i)=t_H(D_i)$ (we refer the reader to \cite[\S2]{GhateKumarPublished} for the notation and terminology being used here; note that  we are taking the fields $E$ and $F$ in op. cit. to be $F$ and $\Qp$ here). Consequently, $t_N(D_i/D_{i-1})=t_H(D_i/D_{i-1})$. The equivalence of categories between admissible filtered  $(\vp,N)$-modules and $G_{\Qp}$-representations proved by Colmez--Fontaine \cite{CF} (see also \cite[Theorem~2.4]{GhateKumarPublished}) tells us that for each $i$,  $D_i=\Dcris(V_i)$ for some $G_{\Qp}$-sub-representation $V_i$ of $V_\Pi$ such that 
\[
0=V_{0}\subsetneq V_{1}\subsetneq\cdots \subsetneq V_{n-1}\subsetneq V_n=V_{n+1}\subsetneq V_{n+2}\subsetneq\cdots \subsetneq V_{2n-1}\subsetneq V_{2n}=V_\Pi,
\]
where $V_i/V_{i-1}$ one-dimensional with Hodge--Tate weight $h_i$ for $i=1,2,\ldots, n-1,n+2,\ldots, 2n$ and $V_{n}/V_{n-1}$ is 2-dimensional with Hodge--Tate weights $h_n$ and $h_{n+1}$. This finishes the proof of the {proposition}.
\end{proof} 

\begin{remark}
Note that when $n=2$, {the representations studied in {Proposition}~\ref{thm:upper-tri} have exactly the same form as those} considered in \cite[Corollary~1(i)]{urban05}.
\end{remark}

\section{Construction of bounded Coleman maps and $p$-adic $L$-functions}\label{sec:PR_maps}
We study $\Lambda(\Gamma)$-valued Coleman maps and {a} certain logarithmic matrix attached to $T_\Pi$. This allows us to prove Theorem~\ref{MT2}.

\subsection{Perrin-Riou maps and Coleman maps}\label{sec:col}
Let  $V_\Pi$ be as in previous sections. Recall that $T_\Pi$ is a $G_\QQ$-stable lattice of $V_\Pi$. We  continue to assume that Hyp \ref{hypSet1} holds. The main goal of this section is to define bounded Coleman maps, decomposing the Perrin-Riou map attached to $T_\Pi$.
 If $V_i$ is one of the {sub-representations} in the proof of {Proposition} ~\ref{thm:upper-tri}, we write $T_i$ to be the  $\cO$-lattice $V_i\cap T_\Pi$ inside $V_i$. 
Our construction of Coleman maps relies on patching together the Coleman maps for  the sub-quotients $T_i/T_{i-1}$, $i=n,\ldots, 2n$.

We work under the following hypotheses. 
\begin{hyp}\label{hypSet2}
From now on, we assume the following additional hypotheses:
\begin{itemize}
    \item[\normalfont (Pos)] $h_{2n}\ge0$ and the characters $\theta_{i}$, $i=n+2,\ldots,2n$ in {Proposition}~\ref{thm:upper-tri} are non-trivial;
\item[\normalfont (FL)]$p>h_n-h_{n+1}>1$. 
\end{itemize}
\end{hyp}

Note that the hypothesis (FL) implies that the two-dimensional representation $V_\Pi'$ given in Theorem~\ref{thm:upper-tri} satisfies the Fontaine--Laffaille condition. {It also ensures that the $p$-adic $L$-functions we consider are non-zero. This condition excludes $p$ from being even. With extra work, it is possible that the case $p=2$ may be treated separately if one does not insist on proving the non-triviality of the $p$-adic $L$-functions.}

Let $T=T_i/T_{j}$, where $i,j\in\{0,1,\ldots,2n\}$ with $j<i$.
The hypothesis (Pos) implies that there is an isomorphism of $\Lambda(\Gamma)$-modules 
\[
    \HIw(\Qp,T)\cong \NN(T)^{\psi=1},
\]
{given by Herr's complex} (see \cite[Theorem~A.3]{berger03} and \cite[\S1.3]{BB}).

\begin{proposition}\label{lem:im1dim}
Let $i\in\{n+2,\ldots,2n\}$. The image of $\cL_{T_i/T_{i-1}}$ {as defined in Definition~\ref{defn:basic-objects}} lands inside $\tell_{h_i}\Lambda(\Gamma)\otimes \Dcris(T_i/T_{i-1})$, {where  $\tell_{h_i}$ is given by \eqref{eq:ell}.} 
\end{proposition}
\begin{proof}
Let us write in this proof $T=T_i/T_{i-1}$. Recall from {Proposition}~\ref{thm:upper-tri} that $T=\cO(\cc^{h_i}\theta_i)$. Since the Hodge--Tate weight of $\cO(\theta_i)$ is 0, it follows that $\cL_{\cO(\theta_i)}$ has image in $\Lambda(\Gamma)\otimes\Dcris(\cO (\theta_i))$.
We recall from \cite[\S4.4]{LZ0} that
\[
\left(\prod_{j=1}^{h_{i}} \ell_{-j}\right)\cL_{\cO(\theta_i)}(z)= (\Tw^{h_{i}}\otimes 1)\left(\cL_{T}(z\otimes e_{h_{i}})\right)\cdot t^{h_{i}}e_{-h_{i}}
\]
for $z\in\HIw(\Qp,\cO(\theta_i))$. A direct calculation shows that
\[
\Tw^{-h_{i}}\left(\prod_{j=1}^{h_{i}} \ell_{-j}\right)=\tell_{h_{i}}.
\]
Therefore, the image of $\cL_T$ lies inside
\[
\tell_{h_{i}}\Lambda(\Gamma)\otimes \Dcris(\cO(\theta_i))\cdot t^{-h_i}e_{h_i}=\tell_{h_{i}}\Lambda(\Gamma)\otimes \Dcris(T)
\]
as required.
\end{proof}

\begin{defn}
For $i\in\{n+2,\ldots,2n\}$, fix an $\cO$-basis  $\omega_i$ of $\Dcris(T_i/T_{i-1})$ (which is necessarily a $\vp$-eigenvector, with $\vp(\omega_i)=\alpha_i^{-1}\omega_i$). Proposition~\ref{lem:im1dim} allows us to define 
\[
\col_{\omega_i}:\HIw(\Qp,T_i/T_{i-1})\rightarrow \Lambda(\Gamma)
\]
to be the unique $\Lambda(\Gamma)$-morphsim satisfying  $\cL_{T_i/T_{i-1}}(z)=\col_{\omega_i}(z)\tell_{h_i}\omega_i$. We also define
\[
\cL_{\omega_i}=\tell_{h_i}\col_{\omega_i}.
\]
\end{defn}
\begin{lemma}\label{rk:injective}
For $i\in\{n+2,\ldots,2n\}$,  the maps $\col_{\omega_i}$ are   injective.
\end{lemma}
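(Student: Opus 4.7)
First, a formal reduction. From the defining relation $\cL_{T_i/T_{i-1}}(z)=\col_{\omega_i}(z)\,\tell_{h_i}\,\omega_i$, note that $\omega_i$ is a basis of the rank-one module $\Dcris(T_i/T_{i-1})$ and $\tell_{h_i}$ is a non-zero element of the integral domain $\cH(\Gamma_1)$. Decomposing $\Lambda(\Gamma)=\bigoplus_\eta e_\eta\Lambda(\Gamma)$ along characters of $\Delta$, multiplication by $\tell_{h_i}$ is injective on each factor $\cH(\Gamma_1)$, so the map $\lambda\mapsto\lambda\,\tell_{h_i}\,\omega_i$ from $\Lambda(\Gamma)$ into $\cH(\Gamma)\otimes\Dcris(T_i/T_{i-1})$ is injective. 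Therefore $\ker(\col_{\omega_i})=\ker(\cL_{T_i/T_{i-1}})$, and it suffices to prove the injectivity of $\cL_{T_i/T_{i-1}}$.

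Write $T=T_i/T_{i-1}\cong\cO(\cc^{h_i}\theta_i)$ (Theorem~\ref{thm:upper-tri}); by hypothesis (Pos), $\theta_i$ is a non-trivial unramified character and $h_i\ge 0$. Under the identification $\HIw(\Qp,T)\cong \NN(T)^{\psi=1}$, the Perrin-Riou map $\cL_T$ is the composite of $1-\vp$ with the inverse Mellin transform $\mathfrak{m}^{-1}$. Since $\mathfrak{m}^{-1}$ is an isomorphism and every $\vp$-fixed element is $\psi$-fixed (as $\psi\circ\vp=\mathrm{id}$), the kernel of $\cL_T$ is identified with $\NN(T)^{\vp=1}$. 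The main analytic input I would invoke is Berger's theory of Wach modules, which identifies $\NN(T)^{\vp=1}$ with a subspace of $T^{G_{\Qp(\mu_{p^\infty})}}$.

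It remains to check that $T^{G_{\Qp(\mu_{p^\infty})}}=0$. The cyclotomic character $\cc$ is trivial on $G_{\Qp(\mu_{p^\infty})}$, so this subgroup acts on $T$ only through $\theta_i$. Because $\Qp(\mu_{p^\infty})/\Qp$ is totally ramified, the composition $G_{\Qp(\mu_{p^\infty})}\hookrightarrow G_{\Qp}\twoheadrightarrow G_{\Qp}/I_{\Qp}$ is surjective; hence the non-trivial unramified character $\theta_i$ restricts to a non-trivial character of $G_{\Qp(\mu_{p^\infty})}$. Thus $T^{G_{\Qp(\mu_{p^\infty})}}=0$, finishing the argument.

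The only real obstacle is the invocation of Berger's identification of $\NN(T)^{\vp=1}$ with Galois invariants; everything else reduces to elementary manipulation using (Pos). In the rank-one case one could alternatively verify this directly by writing $\NN(T)=\cO[[\pi]]\,v$ with $\vp(v)=\alpha_i^{-1}q^{h_i}\,v$, turning $\vp(fv)=fv$ into a functional equation for $f\in\cO[[\pi]]$, and deducing $f=0$ by evaluating at $\pi=0$ and using the non-triviality of $\theta_i$ (noting that $\alpha_i p^{-h_i}$ is the unramified Frobenius eigenvalue of $\theta_i$, which is non-trivial).
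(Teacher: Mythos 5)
Your argument is correct and follows the same route as the paper: the key step in both is to identify $\ker\cL_{T_i/T_{i-1}}$ with $\NN(T_i/T_{i-1})^{\vp=1}$, observe that this sits inside $H^0(\Qp(\mu_{p^\infty}),T_i/T_{i-1})$, and conclude this vanishes because $\theta_i$ is a non-trivial unramified character by (Pos). The paper compresses the middle step by citing the proof of Proposition~4.10 in \cite{LZ0}, whereas you unfold it explicitly, but the underlying idea is identical.
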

\begin{proof}
Note that $\NN(T_i/T_{i-1})^{\vp=1}\subset H^0(\Qp(\mu_{p^\infty}),T_i/T_{i-1})$, which is zero, thanks to our hypothesis that $\theta_i\ne1$ (as given in (Pos)). Therefore,  the argument in \cite[proof of Proposition~4.10]{LZ0} shows that  $\cL_{T_i/T_{i-1}}$ is injective. Consequently, $\col_{\omega_i}$ is also injective.
\end{proof}

We now turn our attention to the two-dimensional sub-quotient $T_{n}/T_{n-1}$. For notational simplicity, we shall write $T_\Pi'$ for $T_{n}/T_{n-1}$. We shall consider the Tate twist $T_\Pi'':=T_\Pi'(-h_{n+1})$. 
\begin{lemma}\label{rk:divisible-log}
The image of $\cL_{T_\Pi'}$ lies inside
$\tell_{h_{n+1}}\cH(\Gamma)\otimes \Dcris(T_\Pi').$
\end{lemma}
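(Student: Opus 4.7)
The plan is to mimic the proof of Proposition \ref{lem:im1dim} verbatim, with the two‐dimensional sub‐quotient $T_\Pi'$ playing the role of the one‐dimensional twist $T=\cO(\cc^{h_i}\theta_i)$. The idea is to twist $T_\Pi'$ by $-h_{n+1}$ (which is the smaller of its two Hodge–Tate weights), so that the twisted module $T_\Pi''=T_\Pi'(-h_{n+1})$ has Hodge–Tate weights $h_n-h_{n+1}$ and $0$. By \text{Hyp \ref{hypSet2}} (Pos), $h_{n+1}\ge h_{n+2}\ge\cdots\ge h_{2n}\ge 0$, so both representations have non-negative Hodge–Tate weights and both $\cL_{T_\Pi'}$ and $\cL_{T_\Pi''}$ are defined as in Definition \ref{defn:basic-objects}. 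In particular, $\cL_{T_\Pi''}$ takes values in $\cH(\Gamma)\otimes \Dcris(T_\Pi'')$.

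Concretely, I would apply the twist formula from \cite[\S4.4]{LZ0} recalled in the proof of Proposition~\ref{lem:im1dim}, this time with $T=T_\Pi'$ and $T_\Pi''=T(-h_{n+1})$, to obtain
\[
\left(\prod_{j=1}^{h_{n+1}} \ell_{-j}\right)\cL_{T_\Pi''}(w)=(\Tw^{h_{n+1}}\otimes 1)\bigl(\cL_{T_\Pi'}(w\otimes e_{h_{n+1}})\bigr)\cdot t^{h_{n+1}}e_{-h_{n+1}}
\]
for $w\in \HIw(\Qp, T_\Pi'')$. Solving for $\cL_{T_\Pi'}(w\otimes e_{h_{n+1}})$ (using the identification $\HIw(\Qp,T_\Pi')=\HIw(\Qp,T_\Pi'')\otimes e_{h_{n+1}}$), applying $\Tw^{-h_{n+1}}\otimes 1$, and invoking the same calculation
$\Tw^{-h_{n+1}}\bigl(\prod_{j=1}^{h_{n+1}} \ell_{-j}\bigr)=\tell_{h_{n+1}}$
used in the proof of Proposition \ref{lem:im1dim}, together with the identification $\Dcris(T_\Pi'')\cdot t^{-h_{n+1}}e_{h_{n+1}}=\Dcris(T_\Pi')$, yields
\[
\cL_{T_\Pi'}(w\otimes e_{h_{n+1}})\in \tell_{h_{n+1}}\,\cH(\Gamma)\otimes \Dcris(T_\Pi'),
\]
which is exactly the claim since every element of $\HIw(\Qp,T_\Pi')$ is of the form $w\otimes e_{h_{n+1}}$.

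There is no real obstacle: the only substantive difference from Proposition~\ref{lem:im1dim} is that the twisted representation $T_\Pi''$ is still two-dimensional and non-ordinary (it has one positive Hodge–Tate weight $h_n-h_{n+1}$, positive by (FL)), so $\cL_{T_\Pi''}$ need not have bounded image. This is precisely why the conclusion is stated with $\cH(\Gamma)$ rather than $\Lambda(\Gamma)$, in contrast to the one-dimensional situation where the analogous input $\cL_{\cO(\theta_i)}$ was automatically bounded because $\cO(\theta_i)$ had Hodge–Tate weight $0$. The $\tell_{h_{n+1}}$-divisibility captures the maximal amount of logarithmic growth one can \emph{extract} from $\cL_{T_\Pi'}$ by reducing to the Hodge–Tate weight of the smaller Jordan--Hölder constituent, and the remaining unboundedness corresponding to the direction with Hodge–Tate weight $h_n-h_{n+1}$ will be dealt with in the subsequent construction of the sharp/flat Coleman maps.
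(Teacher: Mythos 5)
Your argument is exactly the paper's: twist by $-h_{n+1}$ to obtain $T_\Pi''$ with Hodge--Tate weights $0$ and $h_n-h_{n+1}\ge 0$, use that $\cL_{T_\Pi''}$ lands in $\cH(\Gamma)\otimes\Dcris(T_\Pi'')$, apply the twist formula of \cite[\S4.4]{LZ0} as in Proposition~\ref{lem:im1dim}, and observe $\Tw^{-h_{n+1}}\bigl(\prod_{j=1}^{h_{n+1}}\ell_{-j}\bigr)=\tell_{h_{n+1}}$. This is correct and identical in substance to the paper's proof (you even fix a small typo in the paper's statement of the twist equation, where $z$ should range over $\HIw(\Qp,T_\Pi'')$ rather than $\HIw(\Qp,T_\Pi')$).
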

\begin{proof}
Note that the Hodge--Tate weights of $T_\Pi''$ are $0$ and $h_n-h_{n+1}\ge0$. The image of $\cL_{T_\Pi''}$ lies inside $\cH(\Gamma)\otimes \Dcris(T_\Pi'')$. Similar to the proof of Proposition~\ref{lem:im1dim}, we have
\begin{equation}
\left(\prod_{i=1}^{h_{n+1}} \ell_{-i}\right)\cL_{T''_\Pi}(z)= (\Tw^{h_{n+1}}\otimes 1)\left(\cL_{T_\Pi'}(z\otimes e_{h_{n+1}})\right)\cdot t^{h_{n+1}}e_{-h_{n+1}}
    \label{eq:PR-twist}
\end{equation}
for $z\in\HIw(\Qp,T'_\Pi)$, which tells us that the image of $\cL_{T_\Pi'}$ lies inside 
$$
\Tw^{-h_{n+1}}\left(\prod_{i=1}^{h_{n+1}} \ell_{-i}\right)\cH(\Gamma)\otimes \Dcris(T_\Pi')=\tell_{h_{n+1}}\cH(\Gamma)\otimes \Dcris(T_\Pi'),
$$
as required.
\end{proof}

Our goal is to define  bounded Coleman maps on $\HIw(\Qp,T_\Pi')$. We do so by first defining such maps on $\HIw(\Qp,T_\Pi'')$ and then twist these maps appropriately.
\begin{defn}\label{defn:twist-col}
\item[i)]We fix an $\cO$-basis $\{\omega_n^\circ,\omega_{n+1}^\circ\}$ of $\Dcris(T_\Pi')$ that is adapted to the filtration of $\Dcris(T_\Pi')$ in the sense that $\omega_n^\circ$ is an $\cO$-basis of $\Fil^{-h_n+1}\Dcris(T_\Pi')$ (see  \cite[\S V.2]{berger04}, where this condition is discussed). We also fix $\vp$-eigenvectors $\omega_n,\omega_{n+1}\in \Dcris(T_\Pi')\otimes_{\cO}F$ with $\vp(\omega_i)=\alpha_i^{-1}\omega_i$ for $i=n$ and $n+1$.
\item[ii)]For $i\in\{n,n+1\}$, we write $\varpi_i^\circ=\omega_i^\circ\cdot  t^{h_{n+1}}e_{-h_{n+1}}\in\Dcris(T_\Pi'')$ and $\varpi_i=\omega_i \cdot t^{h_{n+1}}e_{-h_{n+1}}\in\Dcris(T_\Pi'')\otimes_\cO F$. 
\item[iii)]Let $\{x_n,x_{n+1}\}$ be the Wach module basis of $\NN(T_\Pi'')$ given by \cite[Proposition~V.2.3]{berger04} (which applies to $T_\Pi''$ thanks to our hypotheses (N.ord) and (FL)) lifting $\{\varpi_n^\circ,\varpi_{n+1}^\circ\}$ (meaning that $x_i$ is sent to $\varpi_i^\circ$ under the natural projection  $\NN(T_\Pi'')\rightarrow \Dcris(T_\Pi'')$).
\item[iv)]We define the change of basis matrices $M\in M_{2\times 2}(\Brig)$ and $Q\in M_{2\times 2}(F)$ given by the relations
\[
\begin{pmatrix}
x_n&x_{n+1}
\end{pmatrix}=\begin{pmatrix}
\varpi_n^\circ&\varpi_{n+1}^\circ
\end{pmatrix}M,\quad
\begin{pmatrix}
\varpi_n&\varpi_{n+1}
\end{pmatrix}=\begin{pmatrix}
\varpi_n^\circ&\varpi_{n+1}^\circ
\end{pmatrix}Q.
\]
\item[v)]We define $$\col_{\varpi_n^\circ},\col_{\varpi_{n+1}^\circ}:\HIw(\Qp,T_\Pi'')\rightarrow \Lambda(\Gamma)$$
given by the relation
\[
\cL_{T_\Pi''}(z)=\col_{\varpi_n^\circ}(z)\cdot(1+\pi)\vp(x_n)+\col_{\varpi_{n+1}^\circ}(z)\cdot (1+\pi)\vp(x_{n+1}).
\]
(This makes sense since $M\equiv I_2\mod\pi^{h_n-h_{n+1}}$ and $h_n-h_{n+1}>1$ by (FL). {Thus, \cite[Theorem~3.5]{leiloefflerzerbes10} applies and we may define $\col_{\varpi_n^\circ}$ and $\col_{\varpi_{n+1}^\circ}$ in the same fashion as Definition 3.13 in op. cit.}) 
\item[vi)]We define the logarithmic matrix
\[
M_{\log}''=\mathfrak{m}^{-1}\left((1+\pi)A\vp(M)\right) \in \cH(\Gamma),
\]
where $A\in M_{2\times 2}(F)$ denotes the matrix of $\vp$ with respect to $\{\varpi_n^\circ,\varpi_{n+1}^\circ\}$.

\item[vii)]We also define, for $i=n,n+1$, the $\Lambda(\Gamma)$-morphisms $$\cL_{\varpi_i}:\HIw(\Qp,T_\Pi'')\rightarrow \cH_{\ord_p(\alpha_i)-h_{n+1}}(\Gamma)$$
given by the relation
\[
\cL_{T_\Pi''}(z)=\cL_{\varpi_n}(z)\varpi_n+\cL_{\varpi_{n+1}}(z)\varpi_{n+1}.
\]
\end{defn}

\begin{remark}\label{rk:choice}
 The proof of \cite[Lemma~3.1]{LLZ3} tells us that we may choose $\varpi_{n+1}$ to be $\vp(\varpi_n)$, which we shall do in the rest of the article. In this case, $A$ is given by
 \[
 \begin{pmatrix}
    0&-p^{2h_{n+1}}(\alpha_n\alpha_{n+1})^{-1}\\
    1&p^{h_{n+1}}(\alpha_n^{-1}+\alpha_{n+1}^{-1})
 \end{pmatrix}.
 \]
A direct calculation shows that the matrix $Q$ is of the form
 \[\begin{pmatrix}
    -p^{h_{n+1}}\alpha_{n+1}^{-1}\beta_{n}&-p^{h_{n+1}}\alpha_n^{-1}\beta_{n+1}\\
    \beta_n&\beta_{n+1}
 \end{pmatrix},
 \]
where $\beta_n,\beta_{n+1}\in F^\times$.
\end{remark}

We now define Coleman maps for $T_\Pi'$ by taking appropriate twists of those defined for $T_\Pi''$.

\begin{defn}\label{def:ColBlock}
\item[i)]For $i\in\{n,n+1\}$, we define the Coleman map
\[
\col_{\omega_i^\circ}:\HIw(\Qp,T_\Pi')\rightarrow \Lambda(\Gamma)
\]
as the composition
\[
\HIw(\Qp,T_\Pi')\stackrel{\cdot e_{-h_{n+1}}}{\longrightarrow} \HIw(\Qp,T_\Pi'')\stackrel{\col_{\varpi_i^\circ}}{\longrightarrow}\Lambda(\Gamma)\stackrel{\Tw^{h_{n+1}}}{\longrightarrow}\Lambda(\Gamma).
\]
\item[ii)]We define $\Mlog'$ to be $\Tw^{h_{n+1}}\left(\Mlog''\right)$.

\item[iii)]As in Definition~\ref{defn:twist-col}, we also define, for $i=n,n+1$, the $\Lambda(\Gamma)$-morphisms $$\cL_{\omega_i}:\HIw(\Qp,T_\Pi')\rightarrow \cH_{\ord_p(\alpha_i)}(\Gamma)$$
given by the relation
\[
\cL_{T_\Pi'}(z)=\cL_{\omega_n}(z)\omega_n+\cL_{\omega_{n+1}}(z)\omega_{n+1}.
\]
\end{defn}

\begin{remark}\label{rk:PR2dim}
\item[i)]The maps $\col_{\omega_i^\circ}$ are non-zero and their images can be described explicitly by products of certain linear factors (see \cite[Theorem~5.10]{leiloefflerzerbes11}). 
\item[ii)]The calculations in \cite{BFSuper} (see particularly (9) in op. cit.) show that
\[
\begin{pmatrix}
\cL_{\varpi_n}\\ \cL_{\varpi_{n+1}}
\end{pmatrix}=Q^{-1}M''_{\log}\begin{pmatrix}
\col_{\varpi_n^\circ}\\ \col_{\varpi_{n+1}^\circ}
\end{pmatrix}.
\]
Then \eqref{eq:PR-twist} gives that
\begin{equation}\label{eq:decomp-PR-Col}
  \begin{pmatrix}
\cL_{\omega_n}\\ \cL_{\omega_{n+1}}
\end{pmatrix}=\tell_{h_{n+1}}Q^{-1}M'_{\log}\begin{pmatrix}
\col_{\omega_n^\circ}\\ \col_{\omega_{n+1}^\circ}
\end{pmatrix}.
\end{equation}
\end{remark}

We now define certain projections of the Perrin-Riou map and Coleman maps using wedge products. We expect that the former {will give rise to the $p$-adic $L$-functions of Barrera--Dimitrov--Williams when applied to appropriate cohomology classes, whereas the latter will give rise to certain bounded $p$-adic $L$-functions (see Remark~\ref{rk:ES} for more details).  The latter will also allow us to define signed Selmer groups, which is the content of \S\ref{Selmer} below}.

\begin{defn}\label{defn:PR}
\item[i)]We set
\[
\alpha=\alpha_{n+1}\alpha_{n+2}\cdots \alpha_{2n},\quad \beta=\alpha_n\alpha_{n+2}\cdots \alpha_{2n},
\]
For $\lambda\in\{\alpha,\beta\}$,\footnote{Let $U_p$ be the Hecke operator corresponding to the diagonal matrix $    \left(
    \begin{array}{c|c} 
      p I_n&0 \\ 
      \hline 
      0 & I_n 
    \end{array} 
    \right)$, where $I_n$ denotes the $n\times n$ identity matrix, then $\lambda$ is an integrally normalized $U_p$-eigenvalue on  cohomology of $\Pi$.  We thank Chris Williams for explaining this to us.} we write 
\[
r_\lambda=\ord_p(\lambda)-\sum_{i=n+1}^{2n} h_i.
\]

\item[ii)]For $i\in \{n+2,\ldots,2n\}$, let $\Pr_i:\HIw(\Qp,T_\Pi)\rightarrow \HIw(\Qp,T_i/T_{i-1})$ denote the map induced by the projection $T_\Pi\rightarrow T_i/T_{i-1}$ as given in {Proposition}~\ref{thm:upper-tri}. For $i\in\{n,n+1\}$, we define $\Pr_i:\HIw(\Qp,T_\Pi)\rightarrow \HIw(\Qp,T_\Pi')$ similarly.
\item[iii)]For $\lambda\in\{\alpha,\beta\}$, we define the $\Lambda(\Gamma)$-morphism
\begin{align*}
    \cL^{(\lambda)}_{\underline\omega}:{\bigwedge}^n\HIw(\Qp,T_\Pi)&\rightarrow \cH_{r_\lambda}(\Gamma),\\
    z_1\wedge\cdots\wedge z_n&\mapsto\left(\prod_{i=n+1}^{2n} \tell_{h_i}\right)^{-1}\det(\cL_{\omega_i}\circ{\Pr}_i(z_j)),
\end{align*}
where the subscript $\underline\omega$ represents our choice of $\vp$-eigenvectors $\{\omega_i:i=n,n+1,\ldots,2n\}$ and the subscripts $i$ in the determinant run through the same subscripts in the product that defines $\lambda$. (The fact that the determinant is divisible by the product of $\tell_{h_i}$ follows from Proposition~\ref{lem:im1dim} and  Lemma~\ref{rk:divisible-log}{.)}

\item[iv)]We define similarly for $\bullet\in\{\#,\flat\}$ the Coleman maps
\begin{align*}
    \col^{\bullet}_{\underline\omega}:{\bigwedge}^n\HIw(\Qp,T_\Pi)&\rightarrow \Lambda(\Gamma),\\
    z_1\wedge\cdots\wedge z_n&\mapsto\det\left(\col_{\omega_{i}^\circ}\circ{\Pr}_i(z_j)|\col_{\omega_{n+2}}\circ{\Pr}_{n+2}(z_j)|\cdots |\col_{\omega_{2n}}\circ{\Pr}_{2n}(z_j)\right),
\end{align*}
where $i=n$ for $\bullet=\#$ and  $i=n+1$ for $\bullet=\flat$.
\end{defn}

\begin{remark}
It follows from \eqref{eq:decomp-PR-Col} that 
\[
\begin{pmatrix}
\cL^{(\alpha)}_{\underline\omega}\\\cL^{(\beta)}_{\underline\omega}
\end{pmatrix}=Q^{-1}M_{\log}'\begin{pmatrix}
\col^{\#}_{\underline\omega}\\\col^{\flat}_{\underline\omega}
\end{pmatrix}.
\]
\end{remark}

We finish this subsection by presenting the interpolation formulae of $\cL_{\underline\omega}^{(\lambda)}$.

For $i\in\{n+2,\ldots, 2n\}$, we write $\omega_i'$ for the dual basis of $\omega_i$ under the natural pairing
\[
\langle -,-\rangle:\Dcris(T_i/T_{i-1})\times\Dcris((T_i/T_{i-1})^*(1))\rightarrow\Dcris(\cO(1))=\cO\cdot t^{-1}e_1\rightarrow \cO.
\]
We write $\{\omega_n',\omega_{n+1}'\}$ for the dual basis of $\{\omega_n,\omega_{n+1}\}$ under the natural pairing
\[
    \langle -,-\rangle:\Dcris(T_n/T_{n-1})\otimes F\times\Dcris((T_n/T_{n-1})^*(1))\otimes F\rightarrow\Dcris(F(1))=F\cdot t^{-1}e_1\rightarrow F.
\]
Note that $\vp(\omega_i')=p^{-1}\alpha_i\omega_i'$.

\begin{proposition}\label{prop:interpolation}
Let $k$ be an integer such that $ h_{n+1}\le k\le h_n-1$  and $\theta$ a finite character on $\Gamma$ of conductor $p^m>1$. For $\lambda\in\{\alpha,\beta\}$ and $\bz=z_1\wedge\cdots \wedge z_n\in {\bigwedge}^n\HIw(\Qp,T_\Pi)$, we have
\[
\cL^{(\lambda)}_{\underline\omega}(\bz)(\cc^k\theta)=\left(\prod_{i=n+1}^{2n}\tell_{h_i}(\cc^k\theta)\right)^{-1}\det\left(\frac{k!p^{(m+1)k}}{\alpha_i^m\tau(\theta)}\langle\exp^*(e_\theta z_{i,j}\cdot e_{-k})\cdot t^{-k}e_k,\omega_i'\rangle\right),
\]
where $\tau(\theta)$ is the Gauss sum of $\theta$ and $e_\theta$ represents the element  $\displaystyle\sum_{\sigma\in \Gamma/\Gamma^{p^{m-1}}}\theta(\sigma)\sigma$ in the group ring $\Zp[\mu_{p^{m-1}}][\Gamma/\Gamma^{p^{m-1}}]$, the subscript $i$ in the determinant is indexed as in Definition~\ref{defn:PR}, the element $z_{i,j}$ denotes $\Pr_i(z_j)$ and  $\exp^*$ signifies the Bloch--Kato dual exponential map on $H^1(\Qp(\mu_{p^m}),T_i/T_{i-1})$ for $i\ge n+2$ (or $H^1(\Qp(\mu_{p^m}),T_\Pi')$ otherwise).
\end{proposition}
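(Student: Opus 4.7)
My approach is to unfold the determinantal definition of $\cL^{(\lambda)}_{\underline\omega}$ and apply Perrin-Riou's classical interpolation formula entry-by-entry. By Definition~\ref{defn:PR}(iii), evaluating at $\cc^k\theta$ gives
\[
\cL^{(\lambda)}_{\underline\omega}(\bz)(\cc^k\theta) = \left(\prod_{i=n+1}^{2n}\tell_{h_i}(\cc^k\theta)\right)^{-1}\det\left(\cL_{\omega_i}(\Pr_i(z_j))(\cc^k\theta)\right),
\]
since the $\tell_{h_i}(\cc^k\theta)$ are scalars and can be pulled out of the determinant. It therefore suffices to compute each entry $\cL_{\omega_i}(\Pr_i(z_j))(\cc^k\theta)$ separately. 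For indices $i\ge n+2$, the subquotient $T_i/T_{i-1}=\cO(\cc^{h_i}\theta_i)$ is one-dimensional crystalline of Hodge-Tate weight $h_i$, and $\omega_i$ is a $\vp$-eigenvector with eigenvalue $\alpha_i^{-1}$ by Lemma~\ref{lem:phi}. Since $h_{n+1}>h_{n+2}>\cdots>h_{2n}$ and $k\ge h_{n+1}$, we have $k>h_i$, placing us in the range where Perrin-Riou's logarithm interpolates the Bloch-Kato dual exponential map. The standard interpolation formula, as formulated in \cite[Appendix~B]{LZ0} or \cite[Theorem~4.7]{leiloefflerzerbes11}, then yields the claimed expression $\frac{k!\,p^{(m+1)k}}{\alpha_i^m\tau(\theta)}\langle\exp^*(e_\theta z\cdot e_{-k})\cdot t^{-k}e_k,\omega_i'\rangle$, the power $\alpha_i^m$ arising from the $\vp^m$-eigenvalue on the dual basis and the Gauss-sum factor $\tau(\theta)$ coming from the ramified twist by $\theta$.

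The two-dimensional block $T_\Pi'=T_n/T_{n-1}$ requires slightly more care. The basis $\{\omega_n,\omega_{n+1}\}$ of $\Dcris(T_\Pi')\otimes F$ consists of $\vp$-eigenvectors with eigenvalues $\alpha_n^{-1},\alpha_{n+1}^{-1}$, but is \emph{not} adapted to the Hodge filtration (unlike $\{\omega_n^\circ,\omega_{n+1}^\circ\}$). Nonetheless, $\cL_{\omega_n}$ and $\cL_{\omega_{n+1}}$ of Definition~\ref{def:ColBlock}(iii) are by construction the coefficients of $\cL_{T_\Pi'}(z)$ in this eigenbasis, and Perrin-Riou's interpolation for $\cL_{T_\Pi'}$ is an identity in $F\otimes\Dcris(T_\Pi')$ that can be read off in any basis: pairing with $\omega_i'$ kills the complementary eigenvector and produces exactly $\frac{k!\,p^{(m+1)k}}{\alpha_i^m\tau(\theta)}\langle\exp^*(e_\theta z\cdot e_{-k})\cdot t^{-k}e_k,\omega_i'\rangle$ for $i\in\{n,n+1\}$. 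The hypothesis $h_{n+1}\le k\le h_n-1$ ensures $k$ exceeds the lowest Hodge-Tate weight $h_{n+1}$ (so that $\tell_{h_{n+1}}(\cc^k\theta)\ne 0$ and we land in the dual-exponential regime for both $\omega_n$ and $\omega_{n+1}$), while Lemma~\ref{rk:divisible-log} guarantees the corresponding $\tell_{h_{n+1}}$-divisibility of $\cL_{T_\Pi'}$, thus matching the $\tell_{h_{n+1}}$ factor appearing in the denominator.

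Assembling the individual interpolations row-by-row inside the determinant then yields the proposition. The main obstacle lies in the second paragraph: one must verify that Perrin-Riou's interpolation formula, which is usually stated and proved using a filtration-adapted basis, transports correctly to the $\vp$-eigenbasis $\{\omega_n,\omega_{n+1}\}$ of $\Dcris(T_\Pi')\otimes F$, and that the $\tell_{h_{n+1}}$-divisibility from Lemma~\ref{rk:divisible-log} combines compatibly with the $\tell_{h_i}$-divisibilities for $i\ge n+2$ to give a well-defined quotient with common denominator $\prod_{i=n+1}^{2n}\tell_{h_i}(\cc^k\theta)$. Once this compatibility is checked---the key inputs being the twist identity \eqref{eq:PR-twist} and the change-of-basis argument used to prove Lemma~\ref{rk:divisible-log}---the identity claimed in the proposition follows immediately.
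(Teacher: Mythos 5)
Your proposal is correct and follows essentially the same route as the paper: unfold the determinantal definition, apply the Perrin-Riou interpolation formula from \cite[Theorem~B.5]{LZ0} to each subquotient $T_i/T_{i-1}$, and extract coefficients by pairing the resulting $\Dcris$-valued identity with the dual basis vector $\omega_i'$. The paper treats the two-dimensional block $T_\Pi'$ uniformly with the one-dimensional ones and spends no effort on your worry about the eigenbasis not being filtration-adapted (the interpolation formula is an identity in $\Dcris\otimes F$, so reading off coefficients in any basis is automatic); the only step it makes more explicit than you do is the $\vp$-adjunction $\langle\vp^m(x),\omega_i'\rangle=\langle x,(p\vp)^{-m}\omega_i'\rangle=\alpha_i^{-m}\langle x,\omega_i'\rangle$, which produces the factor $\alpha_i^{-m}$.
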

\begin{proof}
It follows from \cite[Theorem~B.5]{LZ0} that for $i\in\{n,n+2,\ldots 2n\}$ and $z\in\HIw(\Qp,T_i/T_{i-1})$, we have
\[
\cL_{T_{i}/T_{i-1}}(z)(\cc^k\theta)=\frac{j!p^{m(1+k)}}{\tau(\theta)}\vp^m\left(\exp^*(e_\theta z\cdot e_{-k})\cdot t^{-k}e_k\right).
\]
Therefore, 
\begin{align*}
\cL_{\omega_i}(z)(\cc^k\theta)&=\frac{j!p^{m(1+k)}}{\tau(\theta)}\langle\vp^m\left(\exp^*(e_\theta z\cdot e_{-k})\cdot t^{-k}e_k\right),\omega_i'\rangle\\
&=\frac{j!p^{m(1+k)}}{\tau(\theta)}\langle\exp^*(e_\theta z\cdot e_{-k})\cdot t^{-k}e_k,(p\vp)^{-m}(\omega_i')\rangle\\
&=\frac{j!p^{m(1+k)}}{\tau(\theta)}\langle\exp^*(e_\theta z\cdot e_{-k})\cdot t^{-k}e_k,\alpha_i^{-m}(\omega_i')\rangle    
\end{align*}
and the result follows.
\end{proof}

\subsection{Construction of bounded $p$-adic $L$-functions}\label{sec:padicL}
We introduce certain notions related to $p$-adic $L$-functions in order to prove Theorem~\ref{MT2}. We set
\[
\Crit(\Pi)=\{j\in\Z:\mu_{n+1}\le j\le \mu_n\}=\{j\in\Z:h_{n+1}-n+1\le j\le h_n-n\}.
 \] 
 By \cite[Section 1.1]{DJR}, the half integers $j +1/2$ for $j \in \Crit(\Pi)$ are precisely the critical points of the $L$-function $L(\Pi, s)$.
 In a {recent work of Barrera--Dimitrov--Williams \cite{BDW}},  $p$-adic $L$-functions $L_p^{(\lambda)}\in \cH_{r_\lambda}(\Gamma)$ are constructed for $\lambda\in\{\alpha,\beta\}$, where $\alpha$ and $\beta$ are defined as in Definition~\ref{defn:PR}. For an integer $j\in\Crit(\Pi)$ and a finite character $\theta$ on $\Gamma$ of conductor $p^m>1$, we have the interpolation formula:
 \[
 L_p^{(\lambda)}(\cc^j\theta)=\frac{c_{j,\theta}}{\lambda^m}L(\Pi\otimes\theta,j+1/2),
 \]
 where $c_{j,\theta}$ is a constant independent of the choice of $\lambda$ (but depends on $j$ and $\theta$).
 
\begin{remark}
For $\lambda\in\{\alpha,\beta\}$, recall $r_\lambda$ from Definition~\ref{defn:PR}. Our assumptions  (M.Slo) and (N.ord) combine to give $$\ord_p(\lambda)<\mu_n-\mu_{n+1}+1,$$ 
which is precisely the small slope bound condition, under which the construction of $p$-adic $L$-functions of Barrera--Dimitrov--Williams is valid unconditionally.\footnote{We thank Chris Williams for pointing this out to us.}
\end{remark}

 In order to relate these $p$-adic $L$-functions  to the Perrin-Riou maps and logarithmic matrix studied in the previous section, we introduce the following twisted $p$-adic $L$-functions.
 
 \begin{defn}\label{twistBWD}
 For $\lambda\in\{\alpha,\beta\}$ as in Definition~\ref{defn:PR}, the twisted $p$-adic $L$-functions are defined by
 \[
\sL_p^{(\lambda)}=\Tw^{n-1} L_p^{(\lambda)}.
 \]
 \end{defn}
 In particular, for $h_{n+1}\le k\le h_n-1$ and $\theta$ a finite character on $\Gamma$,
 \begin{equation}\label{eq:interpolation}
      \sL_p^{(\lambda)}(\cc^k\theta)= L_p^{(\lambda)}(\cc^j\theta)=\frac{c_{j,\theta}}{\lambda^m}L(\Pi\otimes\theta,j+1/2),
 \end{equation}
 where $j=k-n+1\in \Crit(\Pi)$.

We now prove Theorem~\ref{MT2}. 
 \begin{theorem}\label{thm:sharpflatpadicL}
There exist signed $p$-adic $L$-functions $\sL_p^\#,\sL_p^\flat\in\cH_0(\Gamma)=\Lambda(\Gamma)\otimes F$ such that
\begin{equation}\label{eq:factorization}
    \begin{pmatrix}
   \sL_p^{(\alpha)}\\ \sL_p^{(\beta)}
   \end{pmatrix}=Q^{-1}\Mlog'\begin{pmatrix}
 \sL_p^\#\\  \sL_p^\flat
   \end{pmatrix}.
\end{equation}
Furthermore,  at least one of the two signed $p$-adic $L$-functions, $\sL_p^\#$ and $\sL_p^\flat$, is non-zero.
\end{theorem}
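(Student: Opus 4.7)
The strategy is to define $(\sL_p^\#,\sL_p^\flat)$ as the unique solution to the linear system \eqref{eq:factorization} in $\cH(\Gamma)^2\otimes F$, and then to verify two things: (i) the solution is bounded, i.e.\ lies in $\Lambda(\Gamma)\otimes F$; (ii) at least one entry is nonzero.

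\emph{Step 1: Construction via matrix inversion.} Since $Q\in\mathrm{GL}_2(F)$ by Remark~\ref{rk:choice}, and since $\det(\Mlog')$ is nonzero in $\cH(\Gamma)$ (which follows from the congruence $M\equiv I_2\mod\pi^{h_n-h_{n+1}}$ ensuring that $\det M$ reduces to $1$ modulo $\pi^{h_n-h_{n+1}}$, and hence is a nonzero element of $\Brig$), the matrix $Q^{-1}\Mlog'$ is invertible over $\cH(\Gamma)\otimes F$. I would set
\[
\begin{pmatrix}\sL_p^\#\\ \sL_p^\flat\end{pmatrix}\;:=\;(\Mlog')^{-1}Q\begin{pmatrix}\sL_p^{(\alpha)}\\ \sL_p^{(\beta)}\end{pmatrix}\;\in\;\cH(\Gamma)^2\otimes F,
\]
so that the factorization \eqref{eq:factorization} holds tautologically.

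\emph{Step 2: Boundedness.} This is the technical heart, and I would follow the method pioneered by Sprung~\cite{sprung09} and Lei--Loeffler--Zerbes~\cite{leiloefflerzerbes11} for supersingular modular forms with $a_p\neq 0$. The goal is to show that the two columns of $\Mlog'$ have growth orders precisely $\ord_p(\alpha_n)-h_{n+1}$ and $\ord_p(\alpha_{n+1})-h_{n+1}$, as can be extracted from the explicit form of $A$ in Remark~\ref{rk:choice} together with the Wach-module congruence $M\equiv I_2\mod\pi^{h_n-h_{n+1}}$ (which is available thanks to (FL)). The adjugate of $\Mlog'$ then carries a complementary growth structure. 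Combined with the observation that under (M.Slo) one has $r_\alpha=\ord_p(\alpha_{n+1})-h_{n+1}$ and $r_\beta=\ord_p(\alpha_n)-h_{n+1}$, the matrix product $(\Mlog')^{-1}Q(\sL_p^{(\alpha)},\sL_p^{(\beta)})^T$ lands in growth order exactly $0$, i.e.\ in $\Lambda(\Gamma)\otimes F=\cH_0(\Gamma)$.

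\emph{Step 3: Non-vanishing.} Suppose $\sL_p^\#=\sL_p^\flat=0$. The factorization would then force $\sL_p^{(\alpha)}=\sL_p^{(\beta)}=0$, and by the interpolation formula \eqref{eq:interpolation} this would in turn force $L(\Pi\otimes\theta,j+1/2)=0$ for every critical pair $(j,\theta)$. But the non-vanishing of at least one twisted central critical value of a cuspidal automorphic representation of $\GL_{2n}(\A_\Q)$ is classical (for instance as a consequence of Jacquet--Shalika or Rohrlich-type non-vanishing results for Dirichlet twists of $L$-functions of automorphic forms), yielding a contradiction.

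\emph{Main obstacle.} The crux is Step~2. One must pin down the growth orders of the entries of $\Mlog'$ and of its adjugate, and match them against $r_\alpha$ and $r_\beta$. This rests on a careful analysis of Berger's Wach-module basis $\{x_n,x_{n+1}\}$ of $\NN(T_\Pi'')$ supplied by \cite[Proposition~V.2.3]{berger04} together with the explicit shape of $A$ from Remark~\ref{rk:choice}. The argument parallels the supersingular modular form case in \cite{leiloefflerzerbes11}, but without the Pollack condition $\alpha_n+\alpha_{n+1}=0$ the matrix $A$ is no longer anti-diagonal, which removes the simplifications exploited by Rockwood in \cite{rob} and forces the analysis to be carried out in the full generality permitted by (N.ord).
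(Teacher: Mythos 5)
Your Steps~1 and~3 are essentially sound and parallel the paper's argument (the paper also reduces non-vanishing to the Jacquet--Shalika non-vanishing of a twisted central $L$-value, citing \cite[(1.3)]{JS} via \cite[Prop.\ 3.9]{rob}). The genuine gap is in Step~2.

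Writing $(\sL_p^\#,\sL_p^\flat)^T:=(\Mlog')^{-1}Q(\sL_p^{(\alpha)},\sL_p^{(\beta)})^T$ defines elements only of the fraction field of $\cH(\Gamma)$ a priori: the matrix $(\Mlog')^{-1}=\frac{1}{\det\Mlog'}\mathrm{adj}(\Mlog')$ has poles at every zero of $\det\Mlog'$, and this determinant vanishes at a dense set of characters of the form $\cc^k\theta$ (it is, up to a unit and a twist, a product of cyclotomic polynomials $\Phi_m(1+X)$). Growth-order bookkeeping alone, as you sketch it, controls the rate of growth on shells $|X|=\rho_t$ but cannot show that these poles are removable. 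What is actually needed is the compatibility of the interpolation formulae \eqref{eq:interpolation}: both $\sL_p^{(\alpha)}$ and $\sL_p^{(\beta)}$ take values of the form $c_{j,\theta}\cdot\lambda^{-m}$ with the constant $c_{j,\theta}$ independent of the choice of $\lambda\in\{\alpha,\beta\}$, and this forces the numerator vector $\mathrm{adj}(\Mlog')Q(\sL_p^{(\alpha)},\sL_p^{(\beta)})^T$ to vanish precisely where $\det\Mlog'$ does, after which the growth-order count you describe finishes the argument. This vanishing step is the heart of the Sprung and Lei--Loeffler--Zerbes method, and it is exactly what \cite[Proposition~2.11]{BFSuper} packages; the paper's proof invokes that proposition directly (applied to the $\Tw^{-h_{n+1}}$-twists $\fL_p^{(\lambda)}$, then twisting back by $\Tw^{h_{n+1}}$). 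Without an argument handling the poles of $(\Mlog')^{-1}$ via the interpolation compatibility, Step~2 as written does not establish that $\sL_p^\#,\sL_p^\flat$ lie in $\cH(\Gamma)$, let alone in $\cH_0(\Gamma)$.

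A minor additional remark: your identities $r_\alpha=\ord_p(\alpha_{n+1})-h_{n+1}$ and $r_\beta=\ord_p(\alpha_n)-h_{n+1}$ (using (M.Slo)) are correct and are indeed the right growth exponents to match against the columns of $\Mlog''$; this part of your bookkeeping is compatible with the BFSuper hypothesis that $F_\mu\in\cH_{\ord_p(\mu)-h_{n+1}}(\Gamma)$ for $\mu\in\{\alpha_n,\alpha_{n+1}\}$.
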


\begin{proof}
By \cite[Proposition~2.11]{BFSuper}, the matrix $Q^{-1}\Mlog''$ satisfies the following property. For each $\mu\in\{\alpha_n,\alpha_{n+1}\}$, suppose that we are given $F_\mu\in \cH_{\ord_p(\lambda)-h_{n+1}}(\Gamma)$ such that for all $j\in\{0,\ldots,h_n-h_{n+1}-1\}$ and all Dirichlet characters $\theta$ of conductor $p^m>1$,
\[
F_\mu(\cc^j\theta)=\mu^{-m}\times c_{j,\theta}
\]
for some constant $c_{j,\theta}\in {\overline{\QQ}_p}$ that is independent of the choice of $\lambda$. Then there exist  $F_\#,F_\flat\in\cH_0(\Gamma)$ such that
\[
\begin{pmatrix}
F_{\alpha_n} \\ F_{\alpha_{n+1}}
\end{pmatrix}=Q^{-1}\Mlog''\cdot
\begin{pmatrix}
F_\#\\ F_\flat
\end{pmatrix}.
\]

For $\lambda\in\{\alpha,\beta\}$, let us write
 \[\fL_p^{(\lambda)}=\Tw^{-h_{n+1}}\sL_p^{(\lambda)}.\]
Then, we deduce from \eqref{eq:interpolation} that there exist $\fL_p^\#,\fL_p^\flat\in\cH_0(\Gamma)$ such that
\[
\begin{pmatrix}
   \fL_p^{(\alpha)}\\ \fL_p^{(\beta)}
   \end{pmatrix}=Q^{-1}\Mlog''\begin{pmatrix}
 \fL_p^\#\\  \fL_p^\flat
   \end{pmatrix}.
\]
Hence, we may take
 \[\sL_p^{\bullet}=\Tw^{h_{n+1}}\fL_p^{\bullet}\]
 for $\bullet\in\{\#,\flat\}$, proving \eqref{eq:factorization}.

 Our hypothesis (FL) implies that $|\Crit(\Pi)|>1$. As in \cite[proof of Proposition~{3.13}]{rob}, there exists $j\in \Crit(\Pi)$ such that  $L(\Pi\otimes\theta,j+1/2)\ne0$  by \cite[(1.3)]{JS}. In particular, both $\sL_p^{(\lambda)}$ are non-zero. Consequently, $\sL_p^\#$ and $\sL_p^\flat$ cannot be simultaneously zero by \eqref{eq:factorization}.
\end{proof}

After rescaling the periods in the construction of $\sL_p^{(\alpha)}$ and $\sL_p^{(\beta)}$ in \cite{BDW}, one may assert that $\sL_p^\#,\sL_p^\flat\in\Lambda(\Gamma)$. However, it is unclear to us whether there is an optimal choice of such periods.

\subsection{Reformulation of Rockwood's result}
In this section, we give an explicit description of the matrix $Q^{-1}\Mlog'$ under the Pollack condition, that is
\[
\alpha_n+\alpha_{n+1}=0.\tag{Pol}\label{eq:Pol}
\]
This allows us to recast Rockwood's plus and minus $p$-adic $L$-functions obtained in \cite{rob} in the framework of Theorem~\ref{thm:sharpflatpadicL}.

As we have already mentioned, (Pol) is a special case of the hypothesis (N.ord).
For an integer $m\ge1$, we define Pollack's half logarithms
\begin{align*}
\log_{p,m}^+&=\prod_{j=0}^{m-1}\frac{1}{p}\Tw^{-j}\left(\prod_{i=1}^\infty \frac{\Phi_{2i}(X)}{p}\right),  \\  
\log_{p,m}^-&=\prod_{j=0}^{m-1}\frac{1}{p}\Tw^{-j}\left(\prod_{i=1}^\infty \frac{\Phi_{2i-1}(X)}{p}\right).
\end{align*}
We recall from \cite[\S3]{rob} that there exist two bounded measures $L_p^\pm\in\cH_0(\Gamma)$ such that
\[
L_p^\pm=\frac{L_p^{(\alpha)}\pm L_p^{(\beta)}}{\Tw^{h_{n+1}-n+1}\log_{p,h_n-h_{n+1}}^\pm}.
\]
The construction of these $p$-adic $L$-functions follows closely the work of Pollack in \cite{pollack03}, where these functions were defined for normalized cuspidal eigen-newforms $f$on $\GL_2$ with $a_p(f)=0$. Let us define $\sL_p^\pm$ to be $\Tw^{n-1} L_p^\pm$. Then, by definition,
\[
    \sL_p^\pm=\frac{\sL_p^{(\alpha)}\pm \sL_p^{(\beta)}}{\Tw^{h_{n+1}}\log_{p,h_n-h_{n+1}}^\pm},
\]
which we may rewrite as an matrix equation:
\begin{equation}\label{eq:rob-pm}
   \begin{pmatrix}
   \sL_p^{(\alpha)}\\ \sL_p^{(\beta)}
   \end{pmatrix}=\frac{1}{2}\begin{pmatrix}
   \Tw^{h_{n+1}}\log_{p,h_n-h_{n+1}}^+&\Tw^{h_{n+1}}\log_{p,h_n-h_{n+1}}^-\\
    \Tw^{h_{n+1}}\log_{p,h_n-h_{n+1}}^+&-\Tw^{h_{n+1}}\log_{p,h_n-h_{n+1}}^-
   \end{pmatrix} \begin{pmatrix}
   \sL_p^+\\ \sL_p^-
   \end{pmatrix}.
\end{equation}

We shall recast \eqref{eq:rob-pm} in terms of the logarithmic matrix $Q^{-1}\Mlog'$. Under \eqref{eq:Pol}, the trace of the action of $\vp$ on $\Dcris(T_\Pi'')$ is zero. As in \cite[Proposition~5.10]{leiloefflerzerbes10}, the work of Berger--Li--Zhu \cite{bergerlizhu04} allows us to choose appropriate bases of $\Dcris(T_\Pi'')$ and $\NN(T_\Pi'')$ so that the matrix $\Mlog''$ is of the form
\begin{equation}
    \Mlog''=\begin{pmatrix}
    0&c^-\log_{p,h_n-h_{n+1}}^-\\
   c^+\log_{p,h_n-h_{n+1}}^+&0
    \end{pmatrix},
\end{equation}
where $c^\pm\in\cH_0(\Gamma_1)^\times$. Furthermore, we may choose $\beta_n=\beta_{n+1}=1$ in the matrix $Q$ (see Remark~\ref{rk:choice}), then 
\[
Q=\begin{pmatrix}
p^{h_{n+1}}\alpha_n^{-1}&-p^{h_{n+1}}\alpha_n^{-1}\\
1&1
\end{pmatrix}.
\]
We deduce that
\[
Q^{-1}\Mlog'=\frac{1}{2}\begin{pmatrix}
   d^+\Tw^{h_{n+1}}\log_{p,h_n-h_{n+1}}^+&d^-\Tw^{h_{n+1}}\log_{p,h_n-h_{n+1}}^-\\
    d^+\Tw^{h_{n+1}}\log_{p,h_n-h_{n+1}}^+&-d^-\Tw^{h_{n+1}}\log_{p,h_n-h_{n+1}}^-
   \end{pmatrix}
\]
for some $d^\pm\in\cH(\Gamma_1)^\times$. Therefore, we may rewrite \eqref{eq:rob-pm} as
\[
\begin{pmatrix}
   \sL_p^{(\alpha)}\\ \sL_p^{(\beta)}
   \end{pmatrix}=\frac{1}{2}Q^{-1}\Mlog'\begin{pmatrix}
   \frac{1}{d^+}\sL_p^+\\  \frac{1}{d^-}\sL_p^-
   \end{pmatrix}.
\]

{We conclude this section by the following remark. \begin{remark}Relying on the explicit description of Pollack's half logarithms, Rockwood proved in \cite[Proposition~3.13]{rob} that both $\sL_p^+$ and $\sL_p^-$ are non-zero, which is stronger  than the last assertion of Theorem~\ref{thm:sharpflatpadicL}. It is not clear to us how to prove this without \eqref{eq:Pol} since the description of the matrix $\Mlog'$ is much less explicit in the general case.\end{remark}}

\section{Iwasawa main conjectures and cohomological classes}\label{Selmer}
In this section, we define the so-called signed Selmer groups using the Coleman maps we defined in Section \ref{sec:col} and formulate Iwasawa main conjectures  relating them to the $p$-adic $L$-functions studied in Section \ref{sec:padicL}.
\subsection{Definitions of Selmer groups}
Let us first recall the definitions of the Bloch--Kato Selmer group  and the fine Selmer groups of Coates--Sujatha over a number field studied in \cite{blochkato} and \cite{CoatesSujatha_fineSelmer} respectively.
\begin{defn}\item[i)]Let $\Sigma$ be the set of prime numbers $\ell$ where $T_\Pi$ has bad reduction, as well as the prime $p$ and the archimedean prime in $\Q$.
Let $K$ be a number field and write $K_\Sigma$ for the maximal extension of $K$ that is unramified outside $\Sigma$.

\item[ii)]Write $T_\Pi^\dagger=\Hom_{\mathrm{cts}}(T_\Pi, F/\cO(1))$. The Bloch--Kato Selmer group of $T_\Pi^\dagger$ over $K$ is defined as $$\Sel_p(T_\Pi^\dagger/K):=\Ker\left(H^1(K_\Sigma/K,T_\Pi^\dagger) \rightarrow \prod_v\frac{H^1(K_v,T_\Pi^\dagger)}{H_f^1(K_v,T_\Pi^\dagger)}\right),$$
where $v$ runs through all the places of $K$ dividing $\Sigma$ (see \cite[Section 3]{blochkato} for the definition of the Bloch--Kato subgroups $H^1_f(K_v,T_\Pi^\dagger)$).

\item[iii)] The fine Selmer group of $T_\Pi^\dagger$ over $K$ is defined as $$\Sel_p^0(T_\Pi^\dagger/K):=\Ker\left(H^1(K_\Sigma/K,T_\Pi^\dagger) \rightarrow \prod_vH^1(K_v,T_\Pi^\dagger)\right),$$
where $v$ runs through all the places of $K$ dividing $\Sigma$.
\item[iv)]If $L$ is an infinite algebraic extension of $\Q$, we may define $\Sel_p(T_\Pi^\dagger/L)$ {and $\Sel_p^0(T_\Pi^\dagger/L)$} by the direct limits $\varinjlim\Sel_p(T_\Pi^\dagger/K)$ {and $\varinjlim\Sel_p^0(T_\Pi^\dagger/K)$ respectively}, where $K$ runs over finite sub-extension{s} of $L$.

\end{defn}
Notice that if $L$ is an infinite algebraic extension inside $\Q_\Sigma$, we have
$$\Sel_p(T_\Pi^\dagger/L)=\Ker\left(H^1(\Q_\Sigma/L,T_\Pi^\dagger) \rightarrow \prod_v\frac{H^1(L_w,T_\Pi^\dagger)}{H_f^1(L_w,T_\Pi^\dagger)}\right),$$
where $w$ runs through all places of $L$ dividing $\Sigma$ and $H_f^1(L_w,T_\Pi^\dagger)$ is given by $\varinjlim H_f^1(K_v,T_\Pi^\dagger)$, where $K$ runs through all finite sub-extension{s} of $L$ and $v$ denotes the place of $K$ lying below $w$. Similarly,
$$\Sel_p^0(T_\Pi^\dagger/L)=\Ker\left(H^1(\Q_\Sigma/L,T_\Pi^\dagger) \rightarrow \prod_v{H^1(L_w,T_\Pi^\dagger)}\right).$$

We now give an alternative description of the dual fine Selmer group $\Sel_p^0(T_\Pi^\dagger/K)^\vee$. By \cite[Section A.3]{perrinriou95}, we have the Poitou--Tate exact sequence 
\begin{equation*}\label{PTfinite}
\bigoplus_{v \in \Sigma}H^0(K_v,T_\Pi^\dagger) \rightarrow H^2(K_\Sigma/K,T_\Pi)^\vee \rightarrow H^1(K_\Sigma/K,T_\Pi^\dagger) \rightarrow \bigoplus_{v \in \Sigma}H^1(K_v,T_\Pi^\dagger).
\end{equation*}
Therefore, the fine Selmer group sits inside the following exact sequence:
\begin{equation*}
    \bigoplus_{v \in \Sigma}H^0(K_v,T_\Pi^\dagger) \rightarrow H^2(K_\Sigma/K,T_\Pi)^\vee \rightarrow \Sel_p^0(T_\Pi^\dagger/K) \rightarrow 0.
\end{equation*}
Taking Pontryagin duals and using  the fact that $$H^0(K_v,T_\Pi^\dagger)^\vee\cong H^2(K_v, T_\Pi),$$
we obtain 
\begin{equation}\label{FS_dual}
\Sel_p^0(T_\Pi^\dagger/K)^\vee = \Ker\left(H^2(K_\Sigma/K,T_\Pi)  \rightarrow \bigoplus_{v \in \Sigma}H^2(K_v, T_\Pi)\right).
\end{equation}

Set $\Q_\infty=\Q(\mu_{p^\infty})$ and $\Q_{p,k}=\Q_p(\mu_{p^k})$.
We now define  the signed Selmer groups of $T_\Pi^\dagger$ over  $\Q_\infty.$

\begin{defn}\label{def:sum_col}
\item[i)]For $\bullet\in\{\#,\flat\}$, define the direct sum counterpart of $\col^\bullet_{\underline\omega}$ to be
\begin{align*}
\tcol^\bullet_{\underline\omega}:\HIw(\Qp,T_\Pi)&\rightarrow\Lambda(\Gamma)^{\oplus n},\\
z&\mapsto    \col_{\omega_i^\circ}\circ{\Pr}_i\oplus \bigoplus_{j=n+2}^{2n}\col_{\omega_j}\circ {\Pr}_j,
\end{align*}
where $i=n$ for $\bullet=\#$ and $i=n+1$ for $\bullet=\flat$.
We write
\[
\HIw(\Qp, T_\Pi)^\bullet=\ker\left(\tcol^\bullet_{\underline\omega}\right).
\]
%and denote by $H^1(\Q_{p,k},T_\Pi)^\bullet$ the image of $\HIw(\Qp, T_\Pi)^\bullet$ under the projection $\HIw(\Qp, T_\Pi)\rightarrow H^1(\Q_{p,k},T_\Pi)$.
\item[ii)]Local Tate duality gives a perfect pairing
 \begin{equation}\label{localTate}
 \HIw(\Q_{p},T_\Pi) \times H^1(\Qp(\mu_{p^\infty}), T_\Pi^\dagger) \rightarrow F/\cO.
 \end{equation}
We define  $H^1_\bullet(\Q_{p}(\mu_{p^\infty}),T_\Pi^\dagger)$ to be the orthogonal complement  of  $\HIw(\Q_{p},T_\Pi)^\bullet$ under the Tate pairing \eqref{localTate}.
\end{defn}

 We can now finally define the ``signed" Selmer groups for $T_\Pi^\dagger$ as follows.
\begin{defn}\label{defn:signedSel}
Let $\bullet\in\{\#,\flat\}$. We define
$\Sel_p^\bullet(T_\Pi^\dagger/ \Q_\infty) $ to be the kernel of 
$$H^1(\QQ_\Sigma/\QQ_\infty,T_\Pi^\dagger)\rightarrow \frac{H^1(\Q_{p}(\mu_{p^\infty}),T_\Pi^\dagger)}{H^1_\bullet(\Q_{p}(\mu_{p^\infty}),T_\Pi^\dagger)}\times \prod_{v\nmid p}\frac{H^1(\Q_{\infty, v},T_\Pi^\dagger)}{H^1_f(\QQ_{\infty,v},T_\Pi^\dagger)}$$
where the last product runs through places of $\QQ_\infty$ dividing $\Sigma$ but not $p$.
\end{defn}

\begin{lemma}
  For $\bullet\in\{\#,\flat\}$ the $\Lambda(\Gamma)$-module $\Sel_p^\bullet(T_\Pi^\dagger/\Q_\infty)^{\vee}$ is finitely generated.
\end{lemma}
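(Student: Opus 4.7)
My plan is as follows. By Definition~\ref{defn:signedSel}, $\Sel_p^\bullet(T_\Pi^\dagger/\Q_\infty)$ is a $\Lambda(\Gamma)$-submodule of $H^1(\Q_\Sigma/\Q_\infty, T_\Pi^\dagger)$, being the kernel of a $\Lambda(\Gamma)$-linear map out of it. Taking Pontryagin duals therefore reverses the inclusion into a surjection
\[
H^1(\Q_\Sigma/\Q_\infty, T_\Pi^\dagger)^\vee \twoheadrightarrow \Sel_p^\bullet(T_\Pi^\dagger/\Q_\infty)^\vee
\]
of $\Lambda(\Gamma)$-modules. It therefore suffices to prove that $H^1(\Q_\Sigma/\Q_\infty, T_\Pi^\dagger)^\vee$ is finitely generated over $\Lambda(\Gamma)$.

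For that reduction, I would invoke the topological Nakayama lemma. Since $|\Delta|$ is coprime to $p$, the idempotent decomposition $\Lambda(\Gamma)\cong \prod_{\eta}\Lambda(\Gamma_1)$, indexed by characters $\eta$ of $\Delta$, writes $\Lambda(\Gamma)$ as a product of complete local Noetherian rings $\Lambda(\Gamma_1)\cong \cO\lb \gamma-1\rb$, each with maximal ideal $\mathfrak{m}=(\varpi,\gamma-1)$, where $\varpi$ is a uniformiser of $\cO$. Finite generation of a compact $\Lambda(\Gamma)$-module is thus equivalent to finite generation of each $\eta$-isotypic component as a $\Lambda(\Gamma_1)$-module, and Nakayama further reduces this to finiteness of the reduction modulo $\mathfrak{m}$. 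Dualising, this translates into the claim that the $\mathfrak{m}$-torsion submodule of each isotypic component of $H^1(\Q_\Sigma/\Q_\infty, T_\Pi^\dagger)$ is finite.

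To verify the latter, I would use the short exact sequence $0\to T_\Pi^\dagger[\varpi]\to T_\Pi^\dagger\xrightarrow{\varpi} T_\Pi^\dagger\to 0$ to bound the $\varpi$-torsion of $H^1(\Q_\Sigma/\Q_\infty, T_\Pi^\dagger)$ by a quotient of $H^1(\Q_\Sigma/\Q_\infty, T_\Pi^\dagger[\varpi])$, then apply the inflation-restriction exact sequence for the tower $\Q\subseteq\Q_\infty\subseteq\Q_\Sigma$, after twisting by the relevant character of $\Delta$ to access an arbitrary isotypic component. The essential cohomological input is the classical finiteness theorem: $H^1(\Q_\Sigma/\Q, M)$ is finite for every finite $G_\Q$-module $M$ unramified outside the finite set $\Sigma$. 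All remaining terms in the inflation-restriction sequence involve $\Gamma$-cohomology of finite modules and are therefore automatically finite. The main point requiring a modicum of care is bookkeeping the $\eta$-isotypic projections through Pontryagin duality and twisting, but no substantive obstruction arises; the argument is a standard application of Iwasawa-theoretic techniques in the style of \cite{CoatesSujatha_fineSelmer,perrinriou95}.
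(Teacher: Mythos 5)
Your reduction — duality turning a submodule inclusion into a quotient map, so it suffices to show $H^1(\Q_\Sigma/\Q_\infty, T_\Pi^\dagger)^\vee$ is finitely generated over $\Lambda(\Gamma)$ — is exactly the paper's argument; the paper then simply cites Greenberg's result (\cite[Proposition~3]{greenberg89}) for this finiteness, whereas you instead unpack its proof (Nakayama via the idempotent decomposition, $\varpi$-torsion and inflation--restriction down to $\Q(\mu_p)$, and the classical finiteness of $H^1(\Q_\Sigma/K,M)$ for finite $M$). Both are correct and essentially the same approach; yours is self-contained while the paper's is a one-line citation.
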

\begin{proof}
This follows from the fact that $H^1(\QQ_{\Sigma}/\QQ_\infty,T_\Pi^\dagger)^\vee$ is finitely generated over $\Lambda(\Gamma)$ (which is a result of Greenberg \cite[Proposition~3]{greenberg89}).
\end{proof}

We finish this subsection with an alternative description of $\HIw(\Qp,T_\Pi)^\bullet$.
Note that 
\[
\ker\left(\tcol^\bullet_{\underline\omega}\right)=\ker\left(\col_{\omega_i^\circ}\circ{\Pr}_i\right)\cap\bigcap_{j=n+2}^{2n}\ker\left(\col_{\omega_j}\circ{\Pr}_j\right)
\]
where $i$ is given as in Definition~\ref{def:sum_col}(i).
For  $n+2\le j \le 2n $,  recall from that $\ker\left(\col_{\omega_j}\right)=0$. This implies that $z\in \HIw(\Qp,T_\Pi)$ lies inside $\ker\left(\col_{\omega_j}\circ{\Pr}_j\right)$ if and only if its projection in $\HIw(\Qp,T_j/T_{j-1})$ is zero. Considering the filtration
\[
T_{n+1}\subset T_{n+2}\subset\cdots \subset T_{2n},
\]
we see that
\[
\bigcap_{j=n+2}^{2n}\ker\left(\col_{\omega_j}\circ{\Pr}_j\right)=\Image\left(\HIw(\Qp,T_{n+1})\rightarrow \HIw(\Qp,T_\Pi)\right).
\]

It remains to study $\ker\left(\col_{\omega_i^\circ}\circ{\Pr}_i\right)$. Let us write $\Pr'$ for the natural projection $\HIw(\Qp,T_\Pi)\rightarrow \HIw(\Qp,T_\Pi')$. Then, 
\[
\ker\left(\col_{\omega_i^\circ}\circ{\Pr}_i\right)=\left\{x\in\HIw(\Qp,T_\Pi):{\Pr}'(z)\in \ker\left(\col_{\omega_i^\circ}\right)\right\}.
\]
Therefore, we deduce that 
\begin{align*}
\HIw(\Qp,T_\Pi)^\#&=\big\{x\in\Image\left(\HIw(\Qp,T_n)\rightarrow\HIw(\Qp,T_\Pi)\right):{\Pr}'(z)\in \ker\col_{\omega_n^\circ}\big\},\\
\HIw(\Qp,T_\Pi)^\flat&=\big\{x\in\Image\left(\HIw(\Qp,T_n)\rightarrow\HIw(\Qp,T_\Pi)\right):{\Pr}'(z)\in \ker\col_{\omega_{n+1}^\circ}\big\}.
\end{align*}

When $\alpha_n=-\alpha_{n+1}$, (that is, when the Pollack condition \eqref{eq:Pol} holds), we may describe $\ker\col_{\omega_n^\circ}$ and $\ker\col_{\omega_{n+1}^\circ}$ explicitly in terms of  certain  jumping conditions on the classes in $H^1(\Q_{p,k},T_\Pi')$, similar to Kobayashi's  plus and minus  Selmer conditions for elliptic curves studied in  \cite{kobayashi03} as well as their generalizations to elliptic modular forms studied in \cite{lei11compositio}.

\subsection{Iwasawa Main Conjectures}

 We can now formulate the following ``signed" Iwasawa Main Conjecture. 
 \begin{conj}\label{conj:IMC} For $\bullet\in\{\#,\flat\}$ and $\eta$ a Dirichlet character modulo $p$, the $\Lambda(\Gamma_1)$-module $\Sel_p^\bullet(T_\Pi^\dagger/\Q_\infty)^{\vee, \eta}$ is torsion and 
 there exists an integer $k(\bullet,\eta)\geq 0$, depending on $\eta$ and $\bullet$, such that 
 $$\ch_{\Lambda(\Gamma_1)}(\Sel_p^\bullet(T_\Pi^\dagger/\Q_\infty)^{\vee, \eta})=(\pi^{k(\bullet,\eta)}\sL_p^{\bullet, \eta}/I_{\bullet,\eta}),$$
 where $\pi$ is a uniformizer of $F$ and $I_{\bullet,\eta}$ denotes a generator of $\ch_{\Lambda(\Gamma_1)}\left(\mathrm{coker}\left(\col^{\bullet}_{\underline\omega}\right)^\eta\right)$.
 \end{conj}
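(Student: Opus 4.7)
The plan is to adapt the Euler-system/Coleman-map paradigm of Kato--Kobayashi \cite{Kato, kobayashi03} and Lei--Loeffler--Zerbes \cite{leiloefflerzerbes10, leiloefflerzerbes11}, as indicated in the closing discussion of the paper on Perrin-Riou's Euler system conjecture and the weak Leopoldt conjecture. The overall shape is: (i) produce an Euler system for $T_\Pi$; (ii) establish an explicit reciprocity law identifying its Coleman image with the analytic $p$-adic $L$-functions; (iii) deduce both divisibilities in Conjecture \ref{conj:IMC} by the Kolyvagin--Rubin argument and an Eisenstein congruence.

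First, I would assume a Perrin-Riou Euler system $\{c_m\}$ for $T_\Pi$ whose bottom class lies in $\HIw(\Q, T_\Pi)$, and whose local projections at $p$ give classes $z_1,\ldots,z_n \in \HIw(\Q_p, T_\Pi)$. Write $\bz := z_1 \wedge \cdots \wedge z_n \in \bigwedge^n \HIw(\Q_p, T_\Pi)$. The essential analytic input is an explicit reciprocity law asserting that, up to units,
\[
\cL^{(\lambda)}_{\underline\omega}(\bz) \;=\; \sL_p^{(\lambda)} \qquad \text{for } \lambda \in \{\alpha, \beta\}.
\]
Combining this with the factorization \eqref{eq:factorization} of Theorem~\ref{thm:sharpflatpadicL}, which expresses $(\sL_p^{(\alpha)}, \sL_p^{(\beta)})^t$ as $Q^{-1}\Mlog' \cdot (\sL_p^\#, \sL_p^\flat)^t$, one concludes that the signed Coleman images $\col^\#_{\underline\omega}(\bz)$ and $\col^\flat_{\underline\omega}(\bz)$ equal, up to units, the bounded signed $p$-adic $L$-functions $\sL_p^\#$ and $\sL_p^\flat$.

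Second, I would apply the Iwasawa-theoretic Kolyvagin argument (Perrin-Riou, Rubin) to this Euler system to bound $\Sel_p^\bullet(T_\Pi^\dagger/\Q_\infty)^{\vee,\eta}$. Here the key observation is that the signed local condition $H^1_\bullet(\Q_p(\mu_{p^\infty}), T_\Pi^\dagger)$ was defined as the exact orthogonal complement of $\ker(\tcol^\bullet_{\underline\omega})$ under Tate duality, so the Euler system bound factors through the $\eta$-isotypic Coleman image. Character by character this yields one divisibility
\[
\ch_{\Lambda(\Gamma_1)}\bigl(\Sel_p^\bullet(T_\Pi^\dagger/\Q_\infty)^{\vee,\eta}\bigr) \;\Big|\; \bigl(\pi^{k(\bullet,\eta)}\sL_p^{\bullet,\eta}/I_{\bullet,\eta}\bigr),
\]
where $I_{\bullet,\eta}$ absorbs the cokernel of the Coleman map and $\pi^{k(\bullet,\eta)}$ encodes residual integrality adjustments. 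Non-vanishing of at least one of $\sL_p^\#,\sL_p^\flat$ (Theorem~\ref{thm:sharpflatpadicL}), combined with the reciprocity law, forces $\Lambda(\Gamma_1)$-cotorsionness for the corresponding sign.

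The principal obstacles are threefold. First, constructing the required Euler system for RAESDC representations of $\GL_{2n}$: Perrin-Riou's conjecture is open outside small-rank cases, and one would likely draw on forthcoming work on Euler systems for $\mathrm{GSpin}_{2n+1}$ (via the functorial transfer used in Section~\ref{sec:RASEDC}) to supply the input. Second, and most technically demanding, is the explicit reciprocity law: in the $\GL_2$ setting Kato's direct computation of the dual exponential at Beilinson--Kato classes suffices, but in higher rank one needs a zeta integral/branching-law comparison to match Coleman images with Barrera--Dimitrov--Williams' $p$-adic $L$-functions. Third, the reverse divisibility: in $\GL_2$ this is supplied by Skinner--Urban through Eisenstein congruences on $\mathrm{U}(2,2)$, and no general analogue is currently available for $\GL_{2n}$. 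The middle obstacle is the principal bottleneck: without the reciprocity law even the first divisibility --- and hence cotorsionness --- is inaccessible.
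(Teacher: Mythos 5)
The statement you have been asked to ``prove'' is Conjecture~\ref{conj:IMC}, not a theorem: the paper does not prove it, and indeed explicitly labels its subsequent discussion (\S4.3, ``Conjectural Euler systems'') as ``mostly speculative.'' So there is no proof in the paper to compare your attempt against. That said, your proposal is a reasonable and accurate summary of the strategy the authors themselves sketch as heuristic evidence, and you correctly identify the three open obstructions: Perrin-Riou's Euler system conjecture for $\GL_{2n}$ (Conjecture~\ref{conj:PR}), an explicit reciprocity law matching $\cL^{(\lambda)}_{\underline\omega}(\loc(\bz))$ with the Barrera--Dimitrov--Williams $p$-adic $L$-functions (the equation \eqref{eq:Rec} in Remark~\ref{rk:ES}, which the paper poses with a question mark), and the opposite divisibility, none of which is currently available.

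A few details where your write-up diverges from or misses parts of the paper's discussion. First, the paper's route to cotorsionness runs through the weak Leopoldt conjecture (Conjecture~\ref{conj:WL}) and a corank count using \cite[Prop.~1.3.2]{perrinriou95}, \cite[Proposition in \S3.2.1]{perrinriou94}, and Greenberg's finiteness result; your argument instead infers cotorsionness from non-vanishing of the signed $L$-functions via the reciprocity law, which is logically downstream of the (conjectural) Euler system existence. The paper's final Remark makes the precise relationship explicit: granting weak Leopoldt, cotorsionness of $\Sel_p^\bullet(T_\Pi^\dagger/\Q_\infty)^{\vee,\eta}$ is \emph{equivalent} to the existence of a class in $\bigwedge^n\bH^1(T_\Pi)$ with non-zero signed Coleman image, which is a cleaner formulation than ``non-vanishing of $\sL_p^\bullet$'' (the latter would only suffice given the reciprocity law \eqref{eq:Rec}). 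Second, the paper uses the Poitou--Tate sequence \eqref{PT}, \eqref{PT2} together with Kurihara's argument to recast the conjectural equality as
\[
\ch_{\Lambda(\Gamma_1)}\left(\bH^1(T_\Pi)^\eta/\bZ(T_\Pi)^\eta\right)\stackrel{?}{=}\ch_{\Lambda(\Gamma_1)}\left(\bH^2(T_\Pi)^\eta\right),
\]
an analogue of Kato's formulation \cite[Conjecture~12.10]{Kato}; your proposal does not include this equivalent reformulation, which is arguably the most concrete content of \S4.3. Finally, the constant $\pi^{k(\bullet,\eta)}$ in the conjecture comes from lattice ambiguity and the choice of period, not ``residual integrality adjustments''; you should not overclaim that the Euler system machinery would pin down $k(\bullet,\eta)$.

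In short: correct diagnosis that the statement is open and of the ingredients needed, and the sketch is faithful to the paper's intent, but it is not and cannot be a proof, and it omits the paper's Poitou--Tate reformulation which is the cleanest way to see where the Euler system divisibility would enter.
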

 
 \begin{remark}
 We expect that $\sL_p^{\bullet}$ to be inside the image of $\col^{\bullet}_{\underline\omega}$, which explains the presence of the term $I_{\bullet,\eta}$. See  Remark~\ref{rk:ES} in the next section for more details. {The appearance of $k(\bullet,\eta)$ is due to the possible non-integrality  of the $p$-adic $L$-functions. For any given $\bullet$ and $\eta$, one may set this constant to be zero after normalizing the   periods in the construction of Barrera--Dimitrov-Williams' $p$-adic $L$-functions. However, it is not clear to us whether this can be done simultaneously for all choices of $\bullet$ and $\eta$.} \end{remark}
 
We conclude this subsection by saying a few words on why we expect  $\Sel_p^\bullet(T_\Pi^\dagger/\Q_\infty)^{\vee, \eta}$ to be torsion over $\Lambda(\Gamma_1)$. We recall the following weak Leopoldt conjecture for $T_\Pi^\dagger$ (see \cite[Conjecture~3]{greenberg89}).

\begin{conj}\label{conj:WL}
The second cohomology group $H^2(\Q_\Sigma/\Q_\infty,T_\Pi^\dagger)$ is zero.
\end{conj}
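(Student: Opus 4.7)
The statement is an instance of the weak Leopoldt conjecture and is wide open in the generality considered here; my plan is therefore to sketch a conditional strategy following the template established by Kato for modular forms, pinpointing where the genuinely new input must come from.

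The first step is to recast the vanishing of $H^2$ as a rank statement on Iwasawa cohomology. Using global duality and the Poitou--Tate exact sequence, together with Greenberg's formula for the global Euler--Poincar\'e characteristic of a $p$-adic Galois representation over the cyclotomic $\Z_p$-extension, and noting from Remark~\ref{conjugation} that $\operatorname{trace}(\rho_{\Pi,\lambda,\iota}(c))=0$ so that $\dim_F V_\Pi^{c=+1}=n$, the vanishing of $H^2(\Q_\Sigma/\Q_\infty,T_\Pi^\dagger)$ is equivalent to the assertion that $\HIw(\Q_\Sigma/\Q,T_\Pi)$ has $\Lambda(\Gamma)$-rank exactly $n$. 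The upper bound on this rank is automatic from the Euler characteristic computation, so the task reduces to establishing a matching lower bound.

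The second step would be to produce enough global classes to realise that lower bound. Here I would invoke the Perrin-Riou Euler system conjecture for $\Pi$ referred to in the introduction: it predicts a non-zero $\mathbf{z}\in\bigwedge^n\HIw(\Qp,T_\Pi)$ whose image under $\cL_{\underline\omega}^{(\lambda)}$ recovers, up to the explicit interpolation factors, the twisted Barrera--Dimitrov--Williams $p$-adic $L$-function $\sL_p^{(\lambda)}$. Combining the factorisation \eqref{eq:factorization}, the non-vanishing of at least one of $\sL_p^{\#}$ and $\sL_p^{\flat}$ from Theorem~\ref{MT2}, and the injectivity of the Coleman maps $\col_{\omega_i}$ for $i\ge n+2$ from Lemma~\ref{rk:injective}, the class $\mathbf{z}$ would be forced to be non-zero. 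A Kolyvagin--Kato style argument applied to a derivative Euler system at auxiliary split primes would then furnish the required $n$ linearly independent classes in $\HIw(\Q_\Sigma/\Q,T_\Pi)$, completing the rank lower bound.

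The main obstacle is precisely the construction of this Euler system: Perrin-Riou's conjecture is open for $\GL_{2n}$ at the level of generality of this paper, and producing a non-trivial norm-compatible family of global cohomology classes mapping to the $p$-adic $L$-functions of Barrera--Dimitrov--Williams appears to demand substantial new geometric input, presumably through motivic cohomology of the Shimura varieties attached to $\mathrm{GSpin}_{2n+1}$ or to a related reductive group. An attempted d\'evissage along the filtration of $V_\Pi$ given by Theorem~\ref{MT} reduces weak Leopoldt for the one-dimensional sub-quotients to classical results, but $H^2$ does not behave well under the relevant extensions and the two-dimensional non-ordinary sub-quotient $V_\Pi'$ carries essentially the full difficulty, so this fallback does not suffice. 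In summary, I view the conjecture as effectively equivalent, in this non-ordinary setting, to realising the conjectural Perrin-Riou Euler system for $\Pi$, which is exactly the perspective adopted by the authors in the sentences following the conjecture statement.
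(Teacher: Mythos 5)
You correctly recognise that this is not a theorem but a statement of the weak Leopoldt conjecture, which the paper leaves unproved and uses only conditionally; your reformulation as the rank statement $\rank_{\Lambda(\Gamma_1)}H^1(\Q_\Sigma/\Q_\infty,T_\Pi^\dagger)^{\vee,\eta}=n$ via the Euler characteristic formula, Remark~\ref{conjugation}, and Perrin-Riou's global duality is exactly the equivalence the paper invokes immediately after the conjecture. Your assessment that the remaining lower bound hinges on realising the conjectural Perrin-Riou Euler system (Conjecture~\ref{conj:PR}) is the same perspective the authors articulate in the final subsection, so your treatment is in line with the paper's.
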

By Remark \ref{conjugation} and \cite[Proposition~1.3.2]{perrinriou95}, Conjecture~\ref{conj:WL} is equivalent to 
\[
\rank_{\Lambda(\Gamma_1)}H^1(\Q_\Sigma/\Q_\infty,T_\Pi^\dagger)^{\vee,\eta}\stackrel{?}{=}n.
\]
As in \cite[proof of Proposition~6]{greenberg89}, 
\[
\rank_{\Lambda(\Gamma_1)}\left(\prod_{v\nmid p}\frac{H^1(\Q_{\infty, v},T_\Pi^\dagger)}{H^1_f(\QQ_{\infty,v},T_\Pi^\dagger)}\right)^{\vee,\eta}=0.
\]
By duality, we have
\[
\rank_{\Lambda(\Gamma_1)}\left(\frac{H^1(\Q_{p}(\mu_{p^\infty}),T_\Pi^\dagger)}{H^1_\bullet(\Q_{p}(\mu_{p^\infty}),T_\Pi^\dagger)}\right)^{\vee,\eta}=\rank_{\Lambda(\Gamma_1)}\left(\Ker\left(\tcol^\bullet_{\underline\omega}\right)\right)^\eta=n,
\]
since $\HIw(\Qp,T_\Pi)^\eta$ is of rank $2n$ over $\Lambda(\Gamma_1)$ (see \cite[Proposition in \S3.2.1]{perrinriou94}) and the image of each Coleman map in the direct sum defining $\tcol^\bullet_{\underline\omega}$ is non-zero (see Lemma~\ref{rk:injective} and Remark \ref{rk:PR2dim}(i)). Therefore, $\Sel_p^\bullet(T_\Pi^\dagger/\Q_\infty)^\eta$  is the kernel of a morphism from a $\Lambda(\Gamma_1)$-module of corank  conjecturally  $n$ to a $\Lambda(\Gamma_1)$-module of  corank $n$, which is a necessary (but not sufficient) condition to be cotorsion. This is  {in line} with the Greenberg Selmer group for $p$-ordinary representations studied in \cite{greenberg89}.

\subsection{Conjectural Euler systems}
We discuss in this section how the existence of an Euler system for the representation $T_\Pi$ would allow us to obtain evidence  towards Conjectures~\ref{conj:IMC}. We emphasize that the discussion in this section is mostly speculative. Throughout, we fix $\bullet\in\{\#,\flat\}$ and a Dirichlet character $\eta$ modulo $p$.

\begin{defn}
For $i=1,2$, define the $\Lambda(\Gamma)$-module
 $$\bH^i(T_\Pi) = \varprojlim_k H^i(\QQ_\Sigma/\Q(\mu_{p^k}), T_\Pi).$$
\end{defn}

Perrin-Riou formulated the following conjecture on the existence of Euler systems in \cite{pr-es}:

\begin{conj}\label{conj:PR}
Let $\cN$ be the set of integers of the form $mp^k$ where $m$ is a square-free product of integers that are coprime to the conductor of $T_\Pi$.  There exists a system of cohomological classes
\[
\left\{c_r\in\bigwedge^n_{\cO[G_r]}H^1(\QQ(\mu_r),T_\Pi):r\in\cN\right\}
\]
where $G_r=\Gal(\QQ(\mu_r)/\QQ)$ satisfying a precise norm relation as $m$ varies. Furthermore, the $p$-localization of $c_r$  is related to the complex $L$-values of $T_\Pi^*(1)$ twisted by characters on $G_r$ under the Bloch--Kato dual exponential map. {Furthermore, as $r$ varies, these classes are compatible under corestriction maps, up to multiplication by explicit Euler factors.}
\end{conj}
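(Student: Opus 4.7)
The conjecture is a deep open problem, but the natural plan is to imitate the geometric construction of known Euler systems (Kato for $\GL_2$; Lei--Loeffler--Zerbes, Loeffler--Skinner--Zerbes, Cauchi--Rodrigues Jacinto for higher rank groups) and exploit the fact that $\Pi$ admits a global Shalika model, so that $L(\Pi,s)$ has a Friedberg--Jacquet integral representation with respect to the Levi subgroup $H:=\GL_n\times \GL_n \hookrightarrow G=\GL_{2n}$. The overall strategy is to construct the classes $c_r$ as projections to the $\Pi$-isotypic component of étale realizations of motivic cohomology classes on the locally symmetric space (or integral Shimura-type model) attached to $G$, obtained by pushforward along the closed embedding $i\colon S_H\hookrightarrow S_G$ of an auxiliary class living on $S_H$.

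The first step is to write down the appropriate input class $\eta_r\in H^*_{\mathrm{mot}}(S_H(r),\cdot)$, where $S_H(r)$ denotes the level structure at $r$. For $r=1$ one takes a product of two Beilinson--Eisenstein classes on $S_{\GL_n}\times S_{\GL_n}$ of suitable motivic weight so that the pushforward $i_*\eta_r$ lies in the motivic cohomology group whose étale realization surjects onto $\bigwedge^n H^1(\QQ(\mu_r),T_\Pi)$ after passing to the $\Pi$-component. This requires a careful dimension count using the fact that the non-holomorphic contribution to $H^*(S_G,E_\mu^\vee)_\Pi$ is a rank $n$ free module over $\cO[G_r]$, matching the $n$ in the exterior power. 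Varying $r$ is achieved either by twisting by modular units/Eisenstein classes of prime-to-$p$ conductor $m$, or by introducing CM cycles at level $r$; the cyclotomic variable is obtained from the action of $G_r$ on the underlying geometric object.

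The second step is the norm-compatibility as $r$ varies. This reduces to a local computation at each prime $\ell\mid r$: one must show that the trace of $c_{r\ell}$ along $\QQ(\mu_{r\ell})/\QQ(\mu_r)$ equals $P_\ell(\mathrm{Frob}_\ell^{-1})\cdot c_r$, where $P_\ell$ is the Euler factor of $\Pi$ at $\ell$. This is carried out by writing the trace as an integral of the input class against a certain Hecke test vector on $S_H$ and applying a Iwahori-level zeta integral computation in the style of the tame norm relations of Loeffler--Skinner--Zerbes and Cauchi--Rodrigues Jacinto, combined with an unramified Friedberg--Jacquet calculation. The appearance of the full degree-$2n$ Euler factor (rather than a partial one) is expected precisely because the Shalika period captures the standard $L$-function of $\Pi$.

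The third step is the explicit reciprocity law, identifying the image of $c_r$ under the Bloch--Kato dual exponential with the complex $L$-values $L(\Pi\otimes\theta,1/2)$ for finite characters $\theta$ of $G_r$. One realizes $\exp^*\circ\mathrm{loc}_p(c_r)$ as a $p$-adic regulator of the motivic class and compares it with the archimedean Friedberg--Jacquet integral via a Beilinson-style conjectural equality; on the other side, one matches it to the Barrera--Dimitrov--Williams $p$-adic $L$-function via the interpolation formula recorded in \eqref{eq:interpolation}, and then uses the factorization of Theorem~\ref{thm:sharpflatpadicL} to deduce that each pair of ``signed'' projections $\col^\bullet_{\underline\omega}(c_r)$ recovers $\sL_p^\bullet$ (up to the matrix $Q^{-1}\Mlog'$). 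This is the essential input required to prove one divisibility in the signed Iwasawa main conjecture \ref{conj:IMC}.

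The hardest step, by a wide margin, is the explicit reciprocity law: constructing cohomology classes and verifying norm relations is a structural, albeit delicate, geometric problem, but relating their $p$-adic regulators to complex $L$-values requires both an archimedean Rankin--Selberg / Friedberg--Jacquet zeta integral computation at infinity and a $p$-adic interpolation argument matching the Perrin-Riou regulator to $\sL_p^{(\alpha)},\sL_p^{(\beta)}$. A second serious obstacle is controlling the integrality of $c_r$ so that it genuinely lies in $\bigwedge^n_{\cO[G_r]}H^1(\QQ(\mu_r),T_\Pi)$ and not merely after inverting $p$; this typically requires a careful study of the integral models of the Shimura-type data for $G$, which is not yet available in the full generality considered here.
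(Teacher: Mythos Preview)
The statement you are addressing is labeled \texttt{conj} in the paper, not a theorem; the paper does not prove it. It is stated as Perrin-Riou's conjecture on the existence of rank-$n$ Euler systems (attributed to \cite{pr-es}) and is used only speculatively in \S5.3, where the authors explain what consequences would follow \emph{if} such classes $c_r$ existed. The paper's own discussion is limited to a remark recording the known cases: Kato's Euler system for $\GL_2$, and rank-one constructions for $\mathrm{GSp}(4)$ and $\GL_2\times\GL_2$ due to Loeffler--Skinner--Zerbes and Lei--Loeffler--Zerbes/Kings--Loeffler--Zerbes. There is therefore no proof in the paper to compare your proposal against.

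You correctly flag the statement as a deep open problem and sketch a plausible research program rather than a proof. Your outline (pushforward of Eisenstein-type classes along $H=\GL_n\times\GL_n\hookrightarrow\GL_{2n}$, tame norm relations via local Friedberg--Jacquet computations, explicit reciprocity via comparison with the Barrera--Dimitrov--Williams $p$-adic $L$-function) is in the spirit of the works the paper cites, but none of the steps you describe is currently known in this generality. In particular, the locally symmetric space for $\GL_{2n}/\QQ$ with $n\ge2$ is not a Shimura variety, so there is no \'etale cohomology theory with Galois action into which one could push motivic classes in the way you suggest; this is a structural obstruction, not merely a technical one, and is the reason the paper treats the existence of $\bz$ as purely conjectural. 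Your proposal should be read as a heuristic roadmap, not as a proof sketch with identifiable gaps to fill.
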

\begin{remark}
When the representation comes from {elliptic} modular forms, the existence of Euler systems is known; thanks to the work of Kato \cite{Kato}. There are also results on the existence of rank-one Euler systems (classes lying inside $H^1(\Q(\mu_r),T_\Pi)$, rather than in a wedge product) when
$G=\mathrm{GSp}(4)$ {in the ordinary case} (see \cite{LSZ,LZ-Gsp}) and  $G=\mathrm{GL}(2) \times \mathrm{GL}(2)$ in both ordinary and non-ordinary cases (see \cite{LeiLoefflerZerbes2014,KLZ1,KLZ2,LoefflerZerbes2016}). 
 \end{remark}
 
\begin{remark}\label{rk:ES}
Note that Conjecture~\ref{conj:PR}  predicts the existence of a special element $\bz=z_1\wedge\cdots \wedge z_n\in \bigwedge^n\bH^1(T_\Pi)$. It seems reasonable to expect the following equality to hold:
\begin{equation}
\cL_{\underline\omega}^{(\lambda)}(\loc(\bz))\stackrel{?}{=}\sL_p^{(\lambda)} ,   \label{eq:Rec}
\end{equation}
 $\lambda\in\{\alpha,\beta\}$. Here $\loc$ denotes the localization map 
\[
\bH^1(T_\Pi)\rightarrow \HIw(\Qp,T_\Pi),
\]
which we extend to the wedge products. {In fact, Proposition~\ref{prop:interpolation} gives us a hint on how the classes should be related to complex $L$-values under localizations and the Bloch--Kato dual exponential map.} For the rest of the article, we assume such an element $\bz$ does exist. 
Then \eqref{eq:Rec} would imply that
\[
\col_{\underline\omega}^\bullet(\loc(\bz))=\sL_p^\bullet
\]
{and it would  give an alternative and more direct proof of Theorem~\ref{MT2}.} 
Under various technical hypotheses of the Euler system machinery, it would also give the inclusion $\supset$ of Conjecture~\ref{conj:IMC}.
\end{remark}

The Poitou–Tate exact sequence in \cite[Proposition~A.3.2]{perrinriou95} gives the following exact sequence
\begin{align*}
H^1(\Q_\Sigma/\Q(\mu_{p^k}),T_\Pi) \rightarrow \frac{H^1(\Q_{p,k},T_\Pi)}{H^1(\Q_{p,k},T_\Pi)^\bullet}\oplus \bigoplus_{v \nmid p, v \in \Sigma} \frac{H^1(\Q(\mu_{p^k})_v,T_\Pi)}{H_f^1(\Q(\mu_{p^k})_v,T_\Pi)} \rightarrow \Sel_p^\bullet(T_\Pi^\dagger/\Q(\mu_{p^k}))^\vee& \\
\rightarrow H^2(\Q_\Sigma/\Q(\mu_{p^k}),T_\Pi) \rightarrow \bigoplus_{v \in \Sigma} H^2(\Q(\mu_{p^k})_v, T_\Pi)&.
\end{align*}
On combining this with \eqref{FS_dual}, we obtain  the exact sequence
\begin{align}
H^1(\Q_\Sigma/\Q(\mu_{p^k}),T_\Pi) \rightarrow \frac{H^1(\Q_{p,k},T_\Pi)}{H^1(\Q_{p,k},T_\Pi)^\bullet}\oplus \bigoplus_{v \nmid p, v \in \Sigma} \frac{H^1(\Q(\mu_{p^k})_v,T_\Pi)}{H_f^1(\Q(\mu_{p^k})_v,T_\Pi)} \rightarrow \Sel_p^\bullet(T_\Pi^\dagger/\Q(\mu_{p^k}))^\vee&\notag \\ 
\rightarrow \Sel_p^0(T_\Pi^\dagger/\Q(\mu_{p^k}))^\vee\rightarrow 0.& \label{eq:finite_level}
\end{align}
Upon taking inverse limits,  \cite[Section 17.10]{Kato} tells us that the modules in the second term of \eqref{eq:finite_level} vanish for $v \nmid p$.
Hence, after taking the $\eta$ component and inverse limits,\footnote{{Since the $\cO$-modules appearing in the Poitou--Tate exact sequence are finitely generated, the Mittag--Leffler condition is satisfied.}}  we obtain the following exact sequence
\begin{equation}\label{PT}
   \bH^1(T_\Pi)^\eta \rightarrow \frac{\HIw(\Q_p,T_\Pi)^\eta}{\HIw(\Q_{p},T_\Pi)^{\bullet,\eta}}\rightarrow \Sel_p^\bullet(T_\Pi^\dagger/\Q_\infty)^{\vee, \eta} \rightarrow  {\Sel_p^0(T_\Pi^\dagger/\Q_\infty)^{\vee, \eta} \rightarrow 0} ,
\end{equation}
where the second term is isomorphic to $\Image\left(\tcol_{\underline\omega}^\bullet\right)^\eta$ by the isomorphism theorem.  Let us write 
\[
\bZ(T_\Pi):=\Span_{\Lambda(\Gamma)}\{z_j\}_{j=1}^n\subset \bH^1(T_\Pi).
\]
Then \eqref{PT} gives the following exact sequence:
\begin{equation}\label{PT2}
0\rightarrow\frac{\bH^1(T_\Pi)^\eta}{\bZ(T_\Pi)^\eta} \rightarrow \frac{\Image\left(\tcol^{\bullet}_{\underline\omega}\right)^\eta}{\tcol^{\bullet}_{\underline\omega}\circ \loc(\bZ(T_\Pi))^\eta} \rightarrow \Sel_p^\bullet(T_\Pi^\dagger/\Q_\infty)^{\vee, \eta} \rightarrow {\Sel_p^0(T_\Pi^\dagger/\Q_\infty)^{\vee, \eta} \rightarrow 0}.
\end{equation}

In particular, we see that the equality of characteristic ideals in  Conjecture~\ref{conj:IMC} is, up to a power of  $\pi$, equivalent to 
\begin{equation}\label{eq:KatoIMC}
   \ch_{\Lambda(\Gamma_1)}\left(\frac{\bH^1(T_\Pi)^\eta}{\bZ(T_\Pi)^\eta}\right)\stackrel{?}{=}\ch_{\Lambda(\Gamma_1)}\left(\Sel_p^0(T_\Pi^\dagger/\Q_\infty)^{\vee, \eta}\right), 
\end{equation}
We note that $\Sel_p^0(T_\Pi^\dagger/\Q_\infty)^{\vee, \eta} \hookrightarrow  \bH^2(T_\Pi)^\eta$ and is hence torsion whenever  $\bH^2(T_\Pi)^\eta$ is torsion. As $T_\Pi$ is a finitely generated $\Z_p$-module, we have $$\varprojlim_k\bigoplus_{v \nmid p, v \in \Sigma}H^0(\kappa(v), H^1((\Q(\mu_{p^k})_v)_{nr}, T_\Pi))=0,$$
where $(\Q(\mu_{p^k})_v)_{nr}$ is the maximal unramified extension of  $\Q(\mu_{p^k})_v$ and $\kappa(v)$ is the residue field at $v$.
By an argument similar to \cite[p. 217]{kurihara02},  we have $$\ch_{\Lambda(\Gamma_1)}\left(\Sel_p^0(T_\Pi^\dagger/\Q_\infty)^{\vee, \eta}\right) = \ch_{\Lambda(\Gamma_1)}\left(\bH^2(T_\Pi)^\eta\right).$$ Hence \eqref{eq:KatoIMC} is equivalent to
\[
\ch_{\Lambda(\Gamma_1)}\left(\frac{\bH^1(T_\Pi)^\eta}{\bZ(T_\Pi)^\eta}\right)\stackrel{?}{=}\ch_{\Lambda(\Gamma_1)}\left(\bH^2(T_\Pi)^\eta\right).
\]
This is analogous to Kato's Iwasawa main conjecture  without $p$-adic zeta functions formulated for elliptic modular forms  (see \cite[Conjecture 12.10]{Kato}).

\begin{remark}
In this remark, we suppose that  Conjecture~\ref{conj:WL}  holds. By Remark \ref{conjugation} and \cite[Prop. 1.3.2]{perrinriou95},  $\bH^2(T_\Pi)^\eta$ would be $\Lambda(\Gamma_1)$-torsion, whereas the $\Lambda(\Gamma_1)$-rank of  $\bH^1(T_\Pi)^\eta$ would be $n$. Then \eqref{PT} tells us that  the torsionness of  $\Sel_p^\bullet(T_\Pi^\dagger/\Q_\infty)^{\vee, \eta} $  is in fact equivalent to the existence of an element $c_{p^\infty}\in \bigwedge^n\bH^1(T_\Pi)^n$ such that its image under $\col^{\bullet}_{\underline\omega}\circ \loc$ is non-zero.
\end{remark}

\section*{Data availability statement}
Data sharing not applicable to this article as no datasets were generated or analysed during the current study.

\bibliographystyle{amsalpha}
\bibliography{references, Iwasawa}
\end{document}